 \numberwithin{equation}{section}
 \theoremstyle{plain}
 \newtheorem{thm}{Theorem}[section]
 \newtheorem{prop}[thm]{Proposition}
 \newtheorem{lem}[thm]{Lemma}
 \theoremstyle{definition}
 \theoremstyle{remark}
 \newtheorem{remark}[thm]{Remark}
 \let\pa=\partial
 \let\al=\alpha
 \let\d=\delta
 \let\g=\gamma
 \let\e=\varepsilon
 \let\lam=\lambda
 \let\f=\frac
 \let \les = \lesssim
 \let\om=\omega
 \let \vp = \varphi
\let\B = \Big
 \let\Lam=\Lambda
 \let\Om=\Omega
 \let\td = \tilde
 \let\teq \triangleq
 \let\pa=\partial
 \def\cF{{\mathcal F}}
 \def\cL{{\mathcal L}}
 \def\la{\langle}
 \def\ra{\rangle}
\def\lt{\left}
\def\rt{\right}
 \newcommand{\beq}{\begin{equation}}
 \newcommand{\eeq}{\end{equation}}
  \newcommand{\bal}{\begin{aligned} }
  \newcommand{\eal}{\end{aligned}}
 \newcommand{\ben}{\begin{eqnarray}}
 \newcommand{\een}{\end{eqnarray}}
 \newcommand{\beno}{\begin{eqnarray*}}
 \newcommand{\eeno}{\end{eqnarray*}}
 \newcommand{\R}{\mathbb{R}}
 \newcommand{\CL}{\mathcal{L}}
\newcommand{\sgn}{\mathrm{sgn}}
 \author{Jiajie Chen}
 \address{Applied and Computational Mathematics, California Institute of Technology, Pasadena, CA 91125, USA}
 \date{August 23, 2019}
\title[Generalized Constantin-Lax-Majda Equation with Dissipation]{
Singularity Formation and Global Well-Posedness for the Generalized Constantin-Lax-Majda Equation with Dissipation}
\begin{document}
 \maketitle

 \begin{abstract}
We study a generalization due to De Gregorio and Wunsch et.al. of the Constantin-Lax-Majda equation (gCLM) on the real line 
\[
\omega_t + a u \omega_x  = u_x \omega  - \nu \Lambda^{\gamma} \omega, \quad u_x =  H \omega ,
\]
where $H$ is the Hilbert transform and $\Lambda = (-\partial_{xx})^{1/2}$. We use the method in \cite{chen2019finite} to prove finite time self-similar blowup for $a$ close to $\frac{1}{2}$ and $\gamma=2$ by establishing nonlinear stability of an approximate self-similar profile. For $a>-1$, we discuss several classes of initial data and establish global well-posedness and an one-point blowup criterion for different initial data. For $a\leq-1$, we prove global well-posedness for gCLM with critical and supercritical dissipation.
\end{abstract}

 \section{Introduction}
Constantin, Lax and Majda \cite{CLM85} introduced an one-dimensional equation (CLM) that models the effect of vortex stretching in a three-dimensional incompressible fluid. They established finite time singularity formation in their model. In the CLM equation, the convection term is missing, which we now know has stabilizing effect in 3D incompressible flow 
\cite{hou2008dynamic,lei2009stabilizing}. Inspired by their work, De Gregorio \cite{DG90,DG96} proposed to include a convection term to the CLM equation to model the effects of both convection and the vortex stretching. Okamoto, Sakajo, and Wunsch \cite{OSW08} introduced an one-parameter family of models by modeling the strength of the convection term to study the interplay of convection and vortex stretching. Wunsch \cite{wunsch2011generalized} further studied 
this one-parameter family of models with dissipation. The full model then becomes
\beq\label{eq:DG}
\bal
\om_t + a u \om_x& = u_x \om  - \nu \cL \om\\
        u_x &=  H \om ,
\eal
\eeq
where $a$ is a real parameter, $H$ is the Hilbert transform and $\CL $ is typically some dissipative operator, such as a full or fractional Laplacian $\Lam^{\g}, \g \in [0, 2]$.  
If $a= 0, \nu =0$, \eqref{eq:DG} reduces to the CLM equation. If $a = 1, \nu =0$, it is the De Gregorio equation and $\nu \neq 0$ corresponds to the viscous version. If $a=-1$, this is the equation (up to a differentiation) considered by Cordoba, Cordoba and Fontelos \cite{cordoba2005formation} for the surface quasi-geostrophic (SQG) equation. There are various 1D models proposed in the literature. We refer to \cite{Elg17,sverak2017certain,kiselev2018} for excellent surveys of other 1D models for the 3D Euler equations, surface quasi-geostrophic equation and other equations. Throughout this paper, we call \eqref{eq:DG} the generalized Constantin-Lax-Majda equation (gCLM).

In this paper, we study the singularity formation of \eqref{eq:DG} (gCLM) for a range of $a$ and establish several results about the global well-posedness for entire range of $a$ on the real line.

\subsection{Previous works}
The local well-posedness and the BKM type blowup criterion for \eqref{eq:DG} on the circle have been established in \cite{OSW08,wunsch2011generalized}. Similar results on the real line can be obtained using a standard method. 

For singularity formation or global well-posedness of \eqref{eq:DG}, we first review some results of the inviscid case $\nu =0$. One important feature of the De Gregorio model and its full generalization 
is that it captures the competition between the convection term and the vortex stretching term. It is not hard to see that when $a < 0$, the convection effect would work together with the stretching effect to produce a singularity. Indeed, Castro and Cordoba \cite{Cor10} proved the finite time blow-up for all $a < 0$ based on a Lyapunov functional argument. See also \cite{chen2019finite} for a simple proof on the circle. 
For the special case $a = -1$, finite time blowup was established earlier by Cordoba et.al. \cite{cordoba2005formation}. For $a =0$, finite time singularity was established in \cite{CLM85}.  For $a>0$, there are competing nonlocal stabilizing effect due to the convection and the destabilizing effect due to the vortex stretching, which are of the same order in terms of scaling. 
Due to this competition, the same Lyapunov functional argument in \cite{Cor10} would fail to prove a finite time singularity. 
We remark that the stabilizing effect of convection has been studied by Hou et.al. \cite{hou2008dynamic,lei2009stabilizing}. 

In \cite{Elg17}, Elgindi and Jeong constructed smooth self-similar profile for small $a$ and 
$C^{\al}$ self-similar profile for general $a$.
Recently, the author, Hou and Huang \cite{chen2019finite} established the finite time blowup of the De Gregorio equation, i.e. \eqref{eq:DG} with $a=1, \nu = 0$, with $C_c^{\infty}$ initial data
on the real line by proving that an approximate steady state is nonlinearly stable. 
Self-similar blowup for \eqref{eq:DG} with $\nu =0$ and the entire range of parameter $a$ on 
the real line or the circle for H\"older continuous initial data with compact support were also established in the same paper \cite{chen2019finite}. See also \cite{Elg19} for similar results. The global well-posedness of the De Gregorio equation on the circle with smooth initial data is still open. We remark that Sverak et. al. \cite{sverak2017certain} and \cite{lei2018constantin} proved that the equilibrium $ A \sin(2(x-x_0))$ of the De Gregorio equation on the circle is nonlinearly stable. 

For \eqref{eq:DG} with dissipation, singularity formation or global well-posedness is only known for some special $a$. When $a = 0, \nu >0$, Schochet \cite{Schonet1986} established finite time singularity for the viscous CLM equation with $\cL \om = -\om_{xx}$. In this case, an explicit solution of \eqref{eq:DG} can be obtained. 
When $a=-1$, \eqref{eq:DG} becomes the Cordoba-Cordoba-Fontelos (CCF) equation, which has been studied extensively. Finite time blowup with supercritical dissipation $\cL = \Lam^{\g}, \g < \f{1}{2}$ was established in \cite{hou2008dynamic,silvestre2014transport} and the global well-posedness with subcritical and critical dissipation were established in \cite{cordoba2005formation,dong2008well}. For some global solution to the supercritical CCF, see \cite{ferreira2018global}. When $a \leq -2$ is even and $\cL = \Lam$, the global well-posedness for small initial data was established in \cite{wunsch2011generalized}.

\subsection{Scaling and the critical dissipation} Suppose that $\cL = \Lam^{\g}$ in \eqref{eq:DG} for some $\g \in [0,2]$. Then the solution of \eqref{eq:DG} enjoys the following scaling property: if $\om(x, t)$ is a solution of \eqref{eq:DG}, then for any $\lam > 0$,
\[
\om_{\lam}(x, t) \teq \lam^{\g} \om( \lam x, \lam^{\g} t)
\]
is also a solution of \eqref{eq:DG}. 

For \eqref{eq:DG} with $a >-1$, there is no coercive conserved quantity or a-priori estimate
for general initial data, which makes it very difficult to prove global well-posedness. 
We will show that for several classes of initial data, $|| \om||_{L^1}$ is conserved. In these cases, a simple scaling analysis shows that $\cL = \Lam$ corresponds to the critical dissipation. 

For \eqref{eq:DG} with $a \leq -1$, we will show that the equation possesses a-priori 
$L^{|a|}$ estimate, i.e. $ || \om(t, \cdot) ||_{L^{|a|}} \leq || \om_0 ||_{L^{|a|}} $, which makes $\Lam^{\g}$ with $\g = |a|^{-1}$ the critical dissipation with respect to the natural scaling of the equation.

\subsection{Assumption on $\cL$ when $a>-1$}\label{sec:Lass}
We consider $\cL$ which is slightly subcritical (compared to $\Lam$) and define it below
\beq\label{eq:visop0}
\CL \om(x) \teq P.V.\int  \f{ \om(x) - \om(x-y) }{ m(|y|)  |y|^2 } dy ,
\eeq
where $m:  [0, + \infty)  \to [0, + \infty)$ satisfies the following assumptions

\quad (a) $m$ is a non-decreasing function.

\quad (b) Slightly subcritical dissipation:
 \beq\label{eq:dispm0}
\lim_{r \to 0+}  r m(r)^{1/2} \int_r^1 \f{1}{s^2 m(s)} d s = +\infty , \quad \lim_{r \to 0^+} m(r) = 0.
\eeq

For example, for the fractional Laplacian $\Lambda^{\al}, \al \in (1,2)$, $m$ is $m(r) = r^{\al - 1}$.
The dissipative operator with condition \eqref{eq:dispm0} can be weaker than any fractional Laplacian $\Lambda^{\al } $, $\al > 1$ and approaches the critical dissipation $\Lambda$ up to a logarithm term. In fact, $m(r)$ can behave like $  | \ln r |^{-e}$ for all sufficiently small $r$ and $\e > 0$.

\subsection{Main results}  The main results of this paper are the following. 

 \begin{thm}[Finite time blow-up for $a$ close to $\f{1}{2}$]\label{thm:blowup_vis}
 Consider \eqref{eq:DG} with $\cL \om = -\pa_{xx} \om$. There exists $\d   > 0 $ such that for $a \in  ( \f{1}{2} - \d, \f{1}{2} + \d)$, $ 0\leq \nu \leq 1$, \eqref{eq:DG_vis0} develops a self-similar singularity in finite time for some $C_c^{\infty}$ initial data.
 \end{thm}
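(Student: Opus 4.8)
The plan is to run the dynamic-rescaling and nonlinear-stability machinery of \cite{chen2019finite}.

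\textbf{Step 1 (self-similar reformulation).} I pass to rescaled variables
\[
\om(x,t)=C_\om(\tau)\,\tilde\om\big(C_l(\tau)x,\tau\big),\quad u(x,t)=\frac{C_\om(\tau)}{C_l(\tau)}\,\tilde u\big(C_l(\tau)x,\tau\big),\quad \frac{d\tau}{dt}=C_\om,
\]
under which \eqref{eq:DG} with $\cL\om=-\pa_{xx}\om$ becomes
\[
\tilde\om_\tau+(a\tilde u+c_l y)\tilde\om_y=(\tilde u_y+c_\om)\tilde\om-\mu(\tau)\Lam^2\tilde\om,\qquad \tilde u_y=H\tilde\om,
\]
with $c_\om=-C_\om'/C_\om$, $c_l=C_l'/C_l$ (primes denote $d/d\tau$) and $\mu(\tau)=\nu\,C_l(\tau)^2/C_\om(\tau)$. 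The two rescaling freedoms are pinned by two normalization conditions on $\tilde\om$ near $y=0$; since the dissipation order equals exactly $\gamma=2$, one of them can be chosen so that $2c_l+c_\om\equiv0$, which is compatible with the transport--stretching balance and forces $\mu(\tau)\equiv\bar\mu$ to be constant, with $\bar\mu=\nu\in[0,1]$ after a harmless normalization of $C_\om(0),C_l(0)$. It therefore suffices to (i) produce an approximate steady state $(\bar\om,\bar u,\bar c_\om,\bar c_l)$ of the displayed equation with dissipation coefficient $\bar\mu$, and (ii) show it is nonlinearly stable, uniformly for $\bar\mu\in[0,1]$ and for $a$ near $\tfrac12$; integrating the resulting ODEs for $C_\om,C_l$ then gives a finite blow-up time $T$ with $C_\om\sim(T-t)^{-1}$, $C_l\sim(T-t)^{-1/2}$, i.e.\ a self-similar singularity with profile $\bar\om$.

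\textbf{Step 2 (approximate profile).} I would take $\bar\om$ to be an explicit odd profile for which $a=\tfrac12$ is distinguished, corrected by the leading-order term generated by the $\bar\mu\Lam^2$ contribution, and read off $\bar c_\om,\bar c_l$ from the normalization. Setting
\[
\bar F\teq(\bar u_y+\bar c_\om)\bar\om-(a\bar u+\bar c_l y)\bar\om_y-\bar\mu\Lam^2\bar\om,
\]
one verifies that $\|\bar F\|_X$ is small, the smallness being of size $|a-\tfrac12|$ plus the (controllable) approximation error of the explicit profile. Here $X$ is a carefully chosen weighted space --- a sum of a singular-weight $L^2$ norm, with weight $\vp(y)$ growing like a negative power of $|y|$ at $0$ and decaying at $\infty$, and a non-singular higher-order norm --- designed so that transport is coercive while dissipation and the nonlinearity remain tractable.

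\textbf{Step 3 (linearized coercivity --- the crux).} Writing $\tilde\om=\bar\om+\eta$, the perturbation solves $\eta_\tau=\CL_{\bar\om}\eta+N(\eta)+\bar F$ with $N$ quadratic in $\eta$ and
\[
\CL_{\bar\om}\eta=-(a\bar u+\bar c_l y)\eta_y+(\bar u_y+\bar c_\om)\eta-a\,u[\eta]\,\bar\om_y+(H\eta)\,\bar\om-\bar\mu\Lam^2\eta,\qquad u[\eta]_y=H\eta.
\]
The core estimate is
\[
\frac{d}{d\tau}\|\eta\|_X^2\le-\lambda\|\eta\|_X^2+C\|\eta\|_X^3+C\|\bar F\|_X\|\eta\|_X,\qquad\lambda>0,
\]
valid after removing the finitely many neutral/growing directions generated by the scaling (and, unless one works with odd data, translation) symmetries --- precisely what the normalizations of Step 1 and oddness accomplish. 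Getting $\lambda>0$ demands: integrating the transport term $(a\bar u+\bar c_l y)\pa_y$ by parts against $\vp$ and absorbing the amplification $(\bar u_y+\bar c_\om)\eta$; controlling the nonlocal terms $u[\eta]\bar\om_y$ and $(H\eta)\bar\om$ through the explicit kernel of $H$ and the structure of $\bar\om$, as in \cite{chen2019finite}; and --- new compared to the inviscid problem --- verifying that $-\bar\mu\Lam^2\eta=\bar\mu\,\eta_{yy}$ survives commutation with the singular weight, i.e.\ that $[\pa_{yy},\vp]$ generates only favorably-signed or controllably-small terms near $y=0$, so the estimate stays \emph{uniform} as $\bar\mu\downarrow0$ (we may not use $\bar\mu$ for coercivity). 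A separate elliptic/bootstrap argument (using $u[\eta]_y=H\eta$ plus a pointwise bound near the origin) promotes the weighted $L^2$ control to the regularity needed to close $N(\eta)$. I expect this step --- simultaneously choosing $\vp$ and $X$, tracking the sign of every commutator, and keeping all constants uniform in $\bar\mu$ --- to contain essentially all the difficulty.

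\textbf{Step 4 (conclusion).} With the coercive estimate and $\|\bar F\|_X$ small (which holds once $\d$, the half-width of the admissible interval of $a$, is small and the explicit profile is chosen well), a standard bootstrap keeps $\|\eta(\tau)\|_X$ small for all $\tau\ge0$ and forces $\|\eta(\tau)\|_X\to0$; hence $\tilde\om(\cdot,\tau)\to\bar\om$ and the original solution blows up self-similarly at a finite $T$ with profile $\bar\om$. To obtain $C_c^\infty$ Cauchy data for \eqref{eq:DG}, perturb $\bar\om$ by a compactly supported smooth function --- the profile itself decays only algebraically, but nonlinear stability absorbs the truncation error --- and use local well-posedness to propagate smoothness up to $T$; for $\nu=0$ the higher-order control of $X$ is obtained as in \cite{chen2019finite}, and for $\nu>0$ the $\Lam^2$ dissipation only helps. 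Keeping every constant uniform in $\nu\in[0,1]$ completes the proof.
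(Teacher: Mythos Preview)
Your Step 1 contains a genuine error that propagates through the rest. You claim one of the two normalization freedoms can be used to enforce $2c_l+c_\om\equiv 0$, making the rescaled dissipation coefficient $\mu(\tau)$ constant. This is not possible: the ratio $c_l/c_\om$ is not a free parameter but is fixed by the self-similar profile itself. For the explicit inviscid profile at $a=\tfrac12$ the paper exhibits (namely $\bar\om=-2bx/(x^2+b^2)^2$ with $b=\sqrt{3/8}$), one has $\bar c_l=\tfrac13$, $\bar c_\om=-1$, so $2\bar c_l+\bar c_\om=-\tfrac13\neq 0$. Any normalization giving a steady state near this profile must reproduce these values up to an overall multiplicative constant; forcing $2c_l+c_\om=0$ would correspond to the diffusive spatial scale $(T-t)^{1/2}$ and is simply incompatible with this profile. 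Consequently your stated blowup rate $C_l\sim(T-t)^{-1/2}$ is also wrong.

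The paper's mechanism is the opposite of what you propose: precisely because $2\bar c_l+\bar c_\om=-\tfrac13<0$, the rescaled coefficient $\nu(\tau)=C_l(\tau)^{-2}C_\om(\tau)\,\nu$ \emph{decays exponentially} in $\tau$. One then chooses $C_l(0)$ large so that $\nu(0)$ is as small as desired (uniformly over $\nu\in[0,1]$), uses the unmodified inviscid profile as the approximate steady state, and treats the dissipation as a small time-dependent error throughout the weighted $L^2(\vp)$ and $H^4(\psi)$ estimates. Your Steps 2--3 are built on the premise that $\bar\mu$ is an $O(1)$ constant and that the profile must be corrected accordingly, and that coercivity must be proved uniformly in $\bar\mu\in[0,1]$; none of that is needed, and attempting it would be substantially harder (if possible at all) than the actual argument. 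The dissipation is not a uniform $O(1)$ effect to be absorbed---it is asymptotically negligible because the inviscid spatial scaling $c_l=\tfrac13$ is slower than the diffusive one.
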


\begin{remark}
$\nu =0$ corresponds to the inviscid case. If the dissipative operator is replaced by fractional Laplacian $\Lam^{\g}$ with $\g \in [0,2]$, one can apply similar analysis to obtain finite time blowup. We focus on the full Laplacian for simplicity. In \cite{chen2019finite}, the inviscid self-similar profile of \eqref{eq:DG} with $a=0$ was proved to be nonlinearly stable. Using an argument similar to that in the proof of Theorem \ref{thm:blowup_vis}, one can also prove finite time blowup of \eqref{eq:DG}with $\cL = \Lam^{\g}, \g \in [0, 1)$ when $a$ is sufficiently close to $0$ 
\end{remark}

For \ref{eq:DG} with $a > -1$, there is no coercive conserved quantity for general initial data. 
We restrict to the following
initial data with odd symmetry or satisfying some sign property. 

\quad Class 1. $\om_0$ has a fixed sign for all $x \in \R$,

\quad Class 2. $\om_0$ is odd and $\om \geq0 $ for $x> 0$,

\quad Class 3. $\om_0$ is odd and $\om \leq 0$ for $x >0$.

It is not difficult to verify that the above symmetry and sign property in each class are preserved by \eqref{eq:DG} during the evolution. Note that any initial data that have odd or even symmetry and a fixed sign for $x>0$ belong to one of the above classes. Our second main result is the following global well-posedness and one-point blowup criterion. 
\begin{thm}[Global well-posedness and blowup criterion for slightly subcritical gCLM]
\label{thm:GWP}
Consider the dissipative operator $\cL$ defined in \eqref{eq:visop0} satisfying assumption \eqref{eq:dispm0}. Suppose that $a> -1 ,\nu >0$ in \eqref{eq:DG}, $\om_0 \in L^1 \cap H^1$. 
 For $\om_0$ in class 1 and 2, there exists a unique solution of \eqref{eq:DG} globally in time.  For $\om_0$ in class 3, the unique local in time solution cannot be extended beyond $T >0$ if and only if
\beq\label{eq:ux0BKM}
\int_0^T u_x(0, t) dt = +\infty.
\eeq
\end{thm}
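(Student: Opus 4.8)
The plan is to combine the local theory with a continuation criterion, exploit the conserved (or monotone) $L^1$ norm that is special to the three classes, feed this into a nonlinear maximum principle tuned to the slightly subcritical dissipation, and then read off the two conclusions. First I would record, via standard energy estimates --- adapting the local well-posedness of \cite{OSW08,wunsch2011generalized} from the circle to $\R$, the dissipative term only helping --- that for $\om_0\in L^1\cap H^1$ there is a unique solution $\om\in C([0,T_*);L^1\cap H^1)$ on a maximal interval, and that $T_*<\infty$ forces $\int_0^{T_*}\|u_x(\cdot,t)\|_{L^\infty}\,dt=+\infty$; since the Hilbert transform is bounded on $C^\al$, this is equivalent to a simultaneous blow-up of $\|\om\|_{L^\infty}$ and a Hölder seminorm of $\om$. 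I would also verify the invariance of each class: oddness survives because $H$, $\cL$ and $u\pa_x$ commute with $x\mapsto -x$, and for odd data the origin is a fixed point of the characteristic flow $\dot X=a\,u(X,t)$ (as $u(0,t)=0$), so the sign of $\om$ on $(0,\infty)$ is transported along characteristics, using the maximum principle for $\cL$. Integrating \eqref{eq:DG} over $\R$, skew-adjointness of $H$ (so $\int_\R (H\om)\om\,dx=0$) and $\int_\R\cL\om\,dx=0$ give $\tfrac{d}{dt}\int_\R\om\,dx=0$, hence $\|\om(t)\|_{L^1}$ is conserved in class~1; integrating over $(0,\infty)$ for the odd classes, the transport and stretching terms combine into $(a+1)\int_0^\infty (H\om)\om\,dx=0$ while the dissipation leaves $-2\nu\int_0^\infty \om(x)\big(\int_x^\infty\tfrac{dy}{m(y)y^2}\big)dx$, which has a fixed sign, so $\|\om(t)\|_{L^1}$ is non-increasing. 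This uniform-in-time $L^1$ bound is the crucial input for the next step.

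The heart of the argument is a nonlinear maximum principle of Constantin--Vicol type for the operator $\cL$ under assumption \eqref{eq:dispm0}: at a point $x^*$ realizing $\|\om\|_{L^\infty}$ one has $\sgn(\om(x^*))\,\cL\om(x^*)\gtrsim \|\om\|_{L^\infty}^2\big/\big(\|\om\|_{L^1}\,m(\ell)\big)$ with $\ell\sim\|\om\|_{L^1}/\|\om\|_{L^\infty}$. Tracking $M(t):=\|\om(t)\|_{L^\infty}$ then yields $\dot M\le u_x(x^*,t)\,M-\nu\,\cL\om(x^*)$, and a logarithmic bound for the Hilbert transform controls $u_x(x^*)$ by $M$ times a logarithm of lower-order norms. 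Because $\|\om\|_{L^1}$ is conserved or decaying the dissipative term is super-quadratic in $M$, and condition \eqref{eq:dispm0} is precisely what forces $1/(\ell\,m(\ell))$ to grow faster than any power of $\log(1/\ell)$ as $\ell\to 0^+$; hence the dissipation absorbs the nonlinear growth once $M$ is large. Carrying out a parallel, likewise dissipation-dominated estimate for a Hölder seminorm of $\om$ would close a system of differential inequalities bounding every quantity appearing in the continuation criterion --- up to the growth exponent $\int_0^t u_x(x^*,s)\,ds$.

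To finish, for class~1 the estimate closes unconditionally: the nonlinear contribution at the maximum point is dominated by the dissipation using only the conserved $\|\om\|_{L^1}$, so the solution is global. For class~2 one additionally uses that $u_x(0,t)=H\om(0,t)$ is a signed average of $\om(\cdot,t)/y$, hence $\le 0$ since $\om\ge 0$ on $(0,\infty)$; I would show that in the odd setting the strongest amplification occurs at the origin, so the relevant growth exponent is controlled by $\int_0^t [u_x(0,s)]_+\,ds=0$ and the solution is again global. For class~3 the same argument produces a priori bounds carrying a factor $\exp\!\big(C\int_0^t u_x(0,s)\,ds\big)$, now with $u_x(0,t)\ge 0$: if $\int_0^T u_x(0,t)\,dt<\infty$ then the continuation criterion cannot be saturated at $T$ and the solution extends past $T$, while if the solution exists on $[0,T]$ then $u_x(0,\cdot)$ is continuous there and $\int_0^T u_x(0,t)\,dt<\infty$ --- which is the stated equivalence.

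The step I expect to be the main obstacle is the one just above: quantitatively matching the slightly subcritical dissipation, through \eqref{eq:dispm0}, against the non-local transport--stretching nonlinearity, and --- crucially for the odd classes --- extracting the dependence of the amplification on the single number $u_x(0,t)$ rather than on the full $\|u_x(\cdot,t)\|_{L^\infty}$. It is exactly this localization that upgrades the crude $\|u_x\|_{L^\infty}$ continuation criterion into the sharp one-point criterion \eqref{eq:ux0BKM}, and that accounts for the dichotomy between class~3 and classes~1 and~2.
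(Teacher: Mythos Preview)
Your overall architecture --- local theory plus BKM criterion, an $L^1$ control special to the three classes, then a nonlinear maximum principle to bound $\|\om\|_{L^\infty}$ in terms of $\|\om\|_{L^1}$ --- is exactly the paper's. But the way you implement the $L^1$ step for the odd classes contains a genuine error, and this error propagates: it forces you to look for the $u_x(0,t)$ dependence in the wrong place.

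The mistake is the claim that for odd $\om$ one has $(a+1)\int_0^\infty (H\om)\,\om\,dx=0$. Oddness of $\om$ and evenness of $H\om$ make the integrand odd, so the integral over all of $\R$ vanishes, but the half-line integral does not. In fact, writing $H\om$ with the odd kernel and symmetrizing,
\[
\int_0^\infty (H\om)\,\om\,dx
= -\f{1}{\pi}\int_0^\infty\!\!\int_0^\infty \f{\om(x)\,\om(y)}{x+y}\,dx\,dy,
\]
which is $\le 0$ in class~2 (so $\|\om\|_{L^1}$ really is non-increasing) but $\ge 0$ in class~3. It is precisely this positive contribution in class~3 that brings in $u_x(0,t)$: since $-\f{2}{\pi}\int_x^\infty \f{\om(y)}{x+y}\,dy\le -\f{2}{\pi}\int_0^\infty\f{\om(y)}{y}\,dy=u_x(0,t)$ when $\om\le 0$ on $(0,\infty)$, one gets
\[
\f{d}{dt}\int_0^\infty(-\om)\,dx \le (1+a)\,u_x(0,t)\int_0^\infty(-\om)\,dx,
\]
hence $\|\om(t)\|_{L^1}\les e^{(1+a)\int_0^t u_x(0,s)\,ds}\|\om_0\|_{L^1}$. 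That Gronwall bound is where the one-point quantity enters the argument --- not in the $L^\infty$ step.

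Because you thought the $L^1$ norm was uniformly bounded in class~3, you then tried to extract $u_x(0,t)$ from the maximum-principle step, arguing that ``the strongest amplification occurs at the origin''. This cannot work as stated: for odd $\om$ the maximum of $|\om|$ is \emph{not} at the origin (where $\om$ vanishes), so the stretching factor at the maximum is $u_x(x^*,t)$ with $x^*\neq 0$, and there is no mechanism to replace it by $u_x(0,t)$. In the paper the $L^\infty$ bound depends on $u_x(0,t)$ \emph{only through} $\|\om\|_{L^1}$; once the $L^1$ control (Lemma~\ref{lem:vis_l1}) is in hand, the nonlinear maximum principle yields $\|\om\|_{L^\infty}\le C(\|\om\|_{L^1},m)$ with no further reference to the origin. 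The splitting of $u_x(x)$ at scale $\d$ into a near part absorbed by $D(\om)$ and a far part bounded by $\|\om\|_{L^1}/\d$ (your logarithmic Hilbert bound is not needed here), together with the condition \eqref{eq:dispm0} in the form $\tau(\d)=\d\,m(\d)^{1/2}\!\int_\d^1 s^{-2}m(s)^{-1}ds\to\infty$, is what closes that step.
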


\begin{remark}
 The initial data constructed in \cite{chen2019finite} that lead to finite time blowup of the inviscid gCLM \eqref{eq:DG} 
 all fall in class 3. For $\om_0$ in class 3, it is easy to verify that $u_x(t,0) \geq 0$.
 The blowup criterion \eqref{eq:ux0BKM} is an analogue of the celebrated Caffarelli-Kohn-Nirenberg partial regularity theorem \cite{caffarelli1982partial} for the 3D axisymmetric Navier-Stokes equations in the sense that singularity must occur on the symmetry axis, if it exists. \eqref{eq:ux0BKM} is sharp due to the blowup result in Theorem \ref{thm:blowup_vis}. For the De Gregorio equation, i.e. $a=1 , \nu = 0$ in \eqref{eq:DG}, Lei et.al. \cite{lei2018constantin} proved the GWP 
for initial data $\om_0$ in the class 1 with $ |\om_0|^{1/2} \in H^1$. It is not clear if similar result holds for $a \neq 1$ with initial data in this class.
\end{remark}

For $a \leq -1$, there is a-priori $L^{|a|}$ estimate for \eqref{eq:DG}. We prove the global well-posedness of \eqref{eq:DG} in the whole range $a\leq -1$ with critical or supercritical dissipation.

\begin{thm}[Global well-posedness for critical or supercritical gCLM]\label{thm:GWPc}
Suppose that $\cL = \Lam^{\g}, a \leq -1, \nu >0$ in \eqref{eq:DG} and the initial data $\om_0 \in L^{|a|} \cap H^1$. If $ \g \in [ |a|^{-1}, 2 ] $, \eqref{eq:DG_vis2} has a unique global solution. Moreover, for $a < -1$, there exists $\g_1 = \g_1( || \om_0||_{L^{|a|}}, || \om||_{H^1}  , a)  \in (0, |a|^{-1} ) $, such that for each $\g \in [\g_1, |a|^{-1} ]	$, \eqref{eq:DG_vis2} has a unique global solution.
\end{thm}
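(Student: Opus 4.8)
The plan rests on three ingredients. First, the local well-posedness of \eqref{eq:DG} in $H^1$ and the continuation criterion that a local solution extends past $T$ as soon as $\int_0^T\|u_x(\cdot,t)\|_{L^\infty}\,dt<\infty$ (standard, of BKM type). Second, the a priori bound $\|\om(t)\|_{L^{|a|}}\le\|\om_0\|_{L^{|a|}}$: pairing \eqref{eq:DG} with $|\om|^{|a|-2}\om$ and integrating by parts, the transport and vortex-stretching terms collapse to $\frac{a+|a|}{|a|}\int u_x|\om|^{|a|}=0$ because $a=-|a|$, while $\nu\int\Lam^\g\om\,|\om|^{|a|-2}\om\ge 0$ by the C\'ordoba--C\'ordoba inequality, so $\|\om(t)\|_{L^{|a|}}$ is non-increasing. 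Third, the elementary bound (splitting into low and high frequencies, Bernstein on the low part and the embedding $\dot{H}^{s}\hookrightarrow L^\infty$, $s>\frac12$, on the high part)
\[
\|u_x\|_{L^\infty}=\|H\om\|_{L^\infty}\ \lesssim\ \|\om\|_{L^{|a|}}+\|\om\|_{\dot{H}^{s}},\qquad s>\tfrac12 ,
\]
valid for every $a\le-1$. With these it suffices to control $\|\om(\cdot,t)\|_{\dot{H}^{s}}$ for some $s>\frac12$ on each bounded time interval, and uniqueness then comes from the local theory.

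\emph{The range $\g\in[\,|a|^{-1},2\,]$.} Differentiate \eqref{eq:DG}, test with $\om_x$, and integrate by parts: the quadratic contributions reduce to $\frac{2-a}{2}\int u_x\om_x^2+\int (H\om_x)\,\om\,\om_x$, both bounded by $(\|u_x\|_{L^\infty}+\|\om\|_{L^\infty})\|\om_x\|_{L^2}^2$, whence
\[
\frac{d}{dt}\|\om\|_{\dot{H}^1}^2\ \lesssim\ \big(\|\om\|_{L^{|a|}}+\|\om\|_{\dot{H}^{s}}\big)\|\om\|_{\dot{H}^1}^2\ -\ 2\nu\|\om\|_{\dot{H}^{1+\g/2}}^2 .
\]
One now interpolates both $\|\om\|_{\dot{H}^{s}}$ and $\|\om\|_{\dot{H}^1}$ between the controlled norm $\|\om\|_{L^{|a|}}$ (sitting at the homogeneous level $\dot{H}^{1/2-1/|a|}$) and $\|\om\|_{\dot{H}^{1+\g/2}}$, the strongest quantity the dissipation integrates in time. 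A short computation shows the resulting power of $\|\om\|_{\dot{H}^{1+\g/2}}$ is $<2$ precisely when $s<\frac12+(\g-|a|^{-1})$; so when $\g>|a|^{-1}$ one picks $s$ in the nonempty interval $(\frac12,\frac12+\g-|a|^{-1})$, absorbs that term into $-\nu\|\om\|_{\dot{H}^{1+\g/2}}^2$ by Young, and closes a Gr\"onwall inequality using $\|\om(t)\|_{L^{|a|}}\le\|\om_0\|_{L^{|a|}}$; this keeps $\|\om\|_{\dot{H}^1}$ finite and $\int_0^T\|\om\|_{\dot{H}^{1+\g/2}}^2\,dt$ bounded on every bounded interval, and the continuation criterion gives a global solution. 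The endpoint $\g=|a|^{-1}$ is the genuine difficulty: $L^{|a|}$ is then exactly scaling-critical, the interpolation power is exactly $2$, and no Young splitting closes the estimate. For this case I would instead invoke the regularity theory for the drift--diffusion equation \eqref{eq:DG} with the critical dissipation $\Lam^{|a|^{-1}}$ and the critical a priori bound, obtaining first an $L^\infty$ bound and then uniform-in-time H\"older regularity $\|\om(\cdot,t)\|_{C^\beta}\lesssim 1$, via a De Giorgi / nonlinear maximum principle / modulus-of-continuity argument as in the critical SQG and CCF theory \cite{cordoba2005formation,dong2008well}; the velocity $au$ has divergence $au_x$ of the same size as $\om$, which is what lets these arguments run for general $a\le-1$ and not only for the CCF value $a=-1$. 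Once $\|\om\|_{C^\beta}$ is controlled, so is $\|u_x\|_{L^\infty}$, and one concludes as above.

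\emph{The range $a<-1$, $\g\in[\g_1,|a|^{-1})$.} Here $\g<|a|^{-1}<1$, so $L^{|a|}$ is scaling-supercritical and there is no unconditional bound; the point is that $\g_1$ is allowed to depend on $a$, $\|\om_0\|_{L^{|a|}}$, $\|\om_0\|_{H^1}$ (and $\nu$). Take $q=1/\g\;(>|a|)$, the critical Lebesgue exponent for $\Lam^\g$, and run the $L^q$ estimate: pairing \eqref{eq:DG} with $|\om|^{q-2}\om$, bounding $\int u_x|\om|^q$ by H\"older with exponents $(1/\g,\,1/(1-\g))$ together with boundedness of $H$ on $L^{q}$ and the Sobolev inequality $\dot{H}^{\g/2}\hookrightarrow L^{2/(1-\g)}$, one gets
\[
\frac1q\frac{d}{dt}\|\om\|_{L^q}^q\ \le\ \Big(C\,\frac{q-|a|}{q}\,\|\om\|_{L^{q}}-c_q\nu\Big)\,\|\om\|_{L^{q/(1-\g)}}^{q}.
\]
The gain is the prefactor $\frac{q-|a|}{q}\to0$ as $\g\uparrow|a|^{-1}$, together with $\|\om_0\|_{L^q}\to\|\om_0\|_{L^{|a|}}$: choosing $\g$ close enough to $|a|^{-1}$ (the threshold $\g_1$ determined by $\|\om_0\|_{L^{|a|}},a,\nu$) makes $C\frac{q-|a|}{q}\|\om_0\|_{L^{q}}<c_q\nu$, and a continuity argument then forces $\|\om(t)\|_{L^{q}}\le\|\om_0\|_{L^{q}}$ for all time. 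This recovers an a priori bound at the critical Lebesgue level $L^{1/\g}$, after which the critical-case analysis of the previous paragraph applies (and $\|\om_0\|_{H^1}$ re-enters there, in the $H^1$ regularity bootstrap).

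\textbf{Main obstacle.} The hard step is the critical exponent $\g=|a|^{-1}$: the energy/interpolation scheme is exactly borderline, so one must import the critical-dissipation regularity machinery (De Giorgi--Nash--Moser, the nonlinear maximum principle, or a preserved modulus of continuity) and check it survives replacing the CCF drift by the general $au\,\om_x$; the supercritical range $\g<|a|^{-1}$ is then reduced to this critical case through the vanishing factor $q-|a|$.
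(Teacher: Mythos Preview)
Your subcritical analysis ($\g>|a|^{-1}$) is fine; the paper also dismisses this as standard energy estimates. The genuine gap is at the critical exponent $\g=|a|^{-1}$ for $a<-1$, and this gap contaminates your supercritical argument as well.

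You propose to treat \eqref{eq:DG} at criticality as a drift--diffusion equation and invoke the De Giorgi / modulus-of-continuity / nonlinear maximum principle machinery, arguing that ``the velocity $au$ has divergence $au_x$ of the same size as $\om$, which is what lets these arguments run for general $a\le-1$.'' But the equation is \emph{not} drift--diffusion for $a\ne-1$: writing $au\om_x=a(u\om)_x-au_x\om$, one is left with a zeroth-order growth term $(1+a)u_x\om$ that has no sign. For $a=-1$ this term vanishes (CCF is, after one antiderivative, pure transport plus dissipation, which is why \cite{dong2008well} applies), but for $a<-1$ it does not, and the critical regularity arguments you cite do not obviously survive such a source. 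The paper states this explicitly: ``For $a<-1$, it seems difficult to apply the method of modulus of continuity or the nonlinear maximal principle \ldots\ due to the vortex stretch term.'' So the step you flag as the ``main obstacle'' is in fact an actual hole.

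The paper bypasses this entirely by proving a \emph{subcritical} a priori bound. For $a<-1$ one fixes $q=|a|+\e$ (not $q=1/\g$) and runs the $L^q$ estimate: the transport--stretching cancellation leaves a coefficient $\e/q$ in front of $\la u_x,|\om|^q\ra$, and combining H\"older, the $L^{1/\g}$-boundedness of $H$, Proposition~\ref{prop:lap} and Sobolev gives
\[
\f1q\f{d}{dt}\|\om\|_{L^q}^q\ \le\ \big(\e\,C_p\,\|\om\|_{L^{1/\g}}-c_p\big)\,\big\|\,|\om|^q\,\big\|_{L^{1/(1-\g)}}.
\]
Choosing first $\e$ small (depending on $\|\om_0\|_{L^{|a|}},\|\om_0\|_{H^1}$) and then $\g_1=\max((2|a|)^{-1},(|a|+\e/2)^{-1})$ forces $1/\g\le |a|+\e/2<q$ for all $\g\in[\g_1,|a|^{-1}]$, so a bootstrap keeps $\|\om\|_{L^q}$ bounded. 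Since $q\g>1$, this is already subcritical and standard energy estimates close; there is no need for any critical-regularity black box. Your supercritical argument is one move away from this: had you taken $q$ strictly larger than $1/\g$ instead of equal to it, you would land on the paper's proof rather than reducing to the unresolved critical case.

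For $a=-1$ you are content to quote \cite{dong2008well}, which is legitimate; the paper instead gives a self-contained proof via a new $L^{1+\e}$ estimate, using a pointwise inequality relating $(X-Y)(X|X|^{q-2}-Y|Y|^{q-2})$ to the dissipation integrand together with the interpolation of Lemma~\ref{lem:inter}, to show $|\la u_x,|\om|^q\ra|\le C\e^{-1/2}\|\om\|_{L^1}D$ and hence that $\|\om\|_{L^{1+\e}}$ is non-increasing for $\e$ small.
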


\begin{remark}
For $a<-1$, $\Lam^{\g}$ with $\g_1 \leq \g < |a|^{-1}$ corresponds to the supercritical dissipation. When $a=-1$, the global well-posedness with critical dissipation was first proved in \cite{dong2008well} using modulus of continuity. The novelty of our approach is to establish a-priori $L^{|a| + \e}$ estimate for sufficiently small $\e > 0$ depending on the norm of the initial data. 
\end{remark}

\begin{remark}
In the presence of dissipation, Theorem \ref{thm:GWPc} shows that for \eqref{eq:DG} with a fix dissipative operator $\cL = \Lam^{\g}, \g >0, \nu >0$ and $a \leq -1$, as $|a|$ becomes larger and satisfies $| a| \g \geq 1$, the equation becomes globally well-posed. This confirms the stabilizing effect of the convection term studied in e.g. \cite{hou2008dynamic,lei2009stabilizing,hou2018global}. 
\end{remark}

\subsection{Organization of the paper}
In Section \ref{sec:vis_blowup}, we construct the self-similar profile for the inviscid gCLM \eqref{eq:DG} with $a=\f{1}{2}$ and prove Theorem \ref{thm:blowup_vis} by showing that such profile is nonlinearly stable. In Section \ref{sec:GWP}, we first perform the $L^1$ estimate for several classes of initial data and then prove Theorem \ref{thm:GWP} using the nonlinear maximal principle \cite{constantin2012nonlinear}. In Section \ref{sec:GWP}, we prove Theorem \ref{thm:GWPc} by establishing a-priori $L^{|a| + \e}$ estimate of the solution. In the Appendix, we prove several Lemmas about the Hilbert transform and the fractional Laplacian and some useful estimates.

\vspace{0.1in}
\paragraph{\bf{Notations}}

We use $\la \cdot , \cdot \ra$ to denote the inner product on $R$, i.e.
\[
\la f , g \ra  \teq \int_{\R} f g dx .
\]
Without specification, the domain of the integral is the whole real line, i.e. 
$
\int f  = \int_{\R} f(x) dx,$  $\iint f = \iint_{\R \times \R} f(x, y) dx dy.$ We use $C $ to denote absolute constant and $C(A,B,..,Z)$ to denote constant depending on $A,B ,..,Z$. These constants may vary from line to line. We use $C_i$ to denote constant which does not vary. We use the notation $A \les B$ if there is some absolute constant $C > 0$ with $A \leq CB$. 
The upper bar notation is reserved for the approximate profile, e.g. $\bar{\om}$.

\section{Finite time blowup for $a $ close to $1/2$}\label{sec:vis_blowup}
In this Section, we prove Theorem \ref{thm:blowup_vis}. Consider \eqref{eq:DG} 
with dissipation equal to $\cL = -\pa_{xx}$
\beq\label{eq:DG_vis0}
\bal
\om_t + a u \om_x &= u_x \om  +  \nu  \om_{xx} , \quad u_x = H \om .
\eal
\eeq
 
Firstly, we study the inviscid problem, i.e. $\nu = 0$. Consider an ansatz of self-similar solution 
$\om = (T-t)^{c_{\om}} \Om( \f{ x}{(T-t)^{c_l}})$. For $a=1/2$, Plugging this ansatz in \eqref{eq:DG_vis0} with $\nu =0$ yields
\[
( c_l x  + \f{1}{2} U ) \Om_x = (c_{\om} + U_x) \Om,
\]
where $U_x = H \Om$. Surprisingly, it has the following analytic self-similar solution 
\beq\label{eq:vis_solu}
\Om = \f{-2bx}{ (x^2 + b^2)^2} ,   \quad  U_x = \f{b^2 - x^2} {  (b^2 + x^2)^2 }, 
\quad U = \f{x}{b^2 + x^2},   \quad  c_l = \f{1}{3},  \quad  c_{\om} = -1,
\eeq
where $b = \sqrt{3/8}$. To verify that $(\Om, c_l)$ solves the self-similar equation, we compute
\[
\bal
( c_l x  + \f{1}{2} U )  \Om_x \cdot \f{1}{\Om} + c_{\om} - U_x
&= ( \f{x }{3}  + \f{1}{2} \f{x}{b^2 + x^2} ) 
\lt( \f{-2 b}{ (b^2 + x^2)^2} + \f{2bx \cdot 4 x}{ (b^2 + x^2)^3}  \rt)
\f{ (b^2 + x^2)^2}{-2bx}    + 1 - \f{b^2 - x^2}{ (b^2 + x^2)^2} \\
&= ( \f{1 }{3}  + \f{1}{2} \f{1}{b^2 + x^2} )  
\lt( 1  - \f{4 x^2}{x^2 + b^2} \rt) + 1  -  \f{b^2 - x^2}{ (b^2 + x^2)^2} \\
& = \f{4}{3} + \f{1}{x^ 2 + b^2} (\f{1}{2} -\f{4}{3}x^2) 
- \f{ b^2 + x^2}{ (x^2 + b^2)^2}   = \f{4}{3}  \f{b^2}{x^2 + b^2} - \f{1 }{2} \f{1}{x^2 + b^2} = 0, 
\eal
\]
where we have used $ b^2 = \f{3}{8}$ to obtain the last identity. Hence,  plugging the self-similar solution into \eqref{eq:DG_vis0} implies finite time self-similar blowup 
for $a=1/2$ and $\nu = 0$. In this self-similar blowup, the spatial blowup scaling is $c_l = 1/3$, 
if we add the diffusion term, such term is asymptotically small compared to the nonlinear term in the equation of the self-similar variables. In our later analysis, we will treat the diffusion term as a small perturbation to the nonlinear part, especially the vortex stretch term $u_x \om$. 

\subsection{Dynamic rescaling formulation}\label{sec:dsform} \

To prove finite time self-similar blowup, we use the strategy developed in \cite{chen2019finite}. We reformulate the problem of proving finite time self-similar singularity into the problem of establishing the nonlinear stability of an approximate self-similar profile using the dynamic rescaling equation.

Let $ \om(x, t), u(x,t)$ be the solutions of the original equation \eqref{eq:DG_vis0}. It is easy to show that 
\beq\label{eq:rescal1}
  \td{\om}(x, \tau) = C_{\om}(\tau) \om(   C_l(\tau) x,  t(\tau) ), \quad   \td{u}(x, \tau) = C_{\om}(\tau) 
C_l(\tau)^{-1}u(C_l(\tau) x, t(\tau))
\eeq
are the solutions to the dynamic rescaling equations
 \beq\label{eq:DGdy00}
\bal
\td{\om}_{\tau}(x, \tau) + ( c_l(\tau) x + a \td{u} ) \td{\om}_x(x, \tau)  &=   c_{\om}(\tau) \td{\om} + \td{u}_x \td{ \om  } + \nu(\tau ) \td{\om}_{xx}, \quad \td{u}_x &= H \td{\om} ,
\eal
\eeq
where  $\nu( \tau ) = C_l(\tau )^{-2} C_{\om}(\tau), \  t(\tau) = \int_0^{\tau} C_{\om}(\tau) d\tau $, 
\beq\label{eq:DGdy01}
\bal
  C_{\om}(\tau) = \exp\lt( \int_0^{\tau} c_{\om} (s)  d \tau\rt) C_{\om}(0), \ C_l(\tau) = \exp\lt( \int_0^{\tau} -c_l(s) d s    \rt) C_l(0) .
\eal
\eeq
We have the freedom to choose the initial rescaling factors $C_{\om}(0), C_l(0)$ and the time-dependent scaling parameters $c_l(\tau)$ and $c_{\om}(\tau)$ according to some normalization conditions. After we determine $C_{\om}(0), C_l(0)$ and the normalization conditions for $c_l(\tau)$ and $c_{\om}(\tau)$, 
\eqref{eq:DGdy00} is completely determined and the solution of 
\eqref{eq:DGdy00} is equivalent to that of the original equation using the scaling relationship described in \eqref{eq:rescal1}-\eqref{eq:DGdy01}, as long as $c_l(\tau)$ and $c_{\om}(\tau)$ remain finite.

We remark that a similar dynamic rescaling formulation was employed in \cite{mclaughlin1986focusing,  landman1988rate} to study the nonlinear Schr\"odinger (and related) equation. In some literature, this formulation is called the modulation technique. It has been very efficient to describe the formation of singularities for many problems like the nonlinear wave equation \cite{merle2015stability}, the nonlinear heat equation \cite{merle1997stability}, the generalized KdV equation \cite{martel2014blow}, and other dispersive problems. It has recently been applied to prove singularity formation in fluid dynamics \cite{elgindi2019finite}.

If there exists $C>0$ such that for any $\tau > 0$, $c_{\om}(\tau) \leq -C <0$ and the solution $\td{\om}$ is nontrivial, e.g. $ || \td{\om}(\tau, \cdot) ||_{L^{\infty}} \geq c >0$ for all $\tau >0$, we then have 
\[
\bal
C_{\om}(\tau) &\leq e^{-C\tau}, \ t(\infty) \leq \int_0^{\infty}  e^{-C \tau } d \tau =C^{-1} <+ \infty \; ,
\eal
\]
and that $| \om(   C_l(\tau) x,  t(\tau) ) | = C_{\om}(\tau)^{-1}  |\td{\om}(x, \tau) | 
\geq e^{C\tau} |\td{\om}(x, \tau) | $  blows up at finite time $T = t(\infty)$.
If $(\td{\om}_{\tau}, c_{l}(\tau), c_{\om}(\tau))$ converges to a steady state $(\om_{\infty}, c_{l, \infty}, c_{\om,\infty})$ of \eqref{eq:DGdy00} as $\tau \to \infty$, one can verify that 
\[
\om(x, t) = \f{1}{1 - t} \om_{\infty}\lt( \f{x}{ (1 - t)^{-c_{l, \infty} / c_{\om, \infty} }  }  \rt)
\]
is a self-similar solution of \eqref{eq:DG_vis0}.

To simplify our presentation, we still use $t$ to denote the rescaled time in the rest of the paper and drop $\td{\cdot}$ in \eqref{eq:DGdy00}, which leads to
\beq\label{eq:DGdy_vis}
\bal
\om_t + (c_l x + au) \om_x &= (c_{\om} + u_x) \om + \nu(t)  \om_{xx} , \quad u_x = H\om ,
\eal
\eeq
where
\beq\label{eq:nu}
\nu(t) =  C_l(t)^{-2} C_{\om}(t)  \nu = \exp \lt( \int_0^{t} (c_{\om}(s) + 2 c_l(s) ) ds  \rt)
C_l(0)^{-2} C_{\om}(0)  \nu .
\eeq

\subsection{Approximate steady state and normalization condition} \

For given $a$ close to $\f{1}{2}$, we use the profile \eqref{eq:vis_solu} to construct the approximate steady state
\beq\label{eq:vis_appro}
\bal
&\bar{\om} = \f{-2bx}{ (x^2 + b^2)^2} , \quad \bar{u}_x = H\bar{\om} =  \f{b^2 - x^2} {  (b^2 + x^2)^2 }, \quad \bar{u} = \f{x}{x^2 + b^2}, \\
&\bar{c}_l = \f{1}{3}  - (a-\f{1}{2}) \bar{u}_x(0), \quad  \bar{c}_{\om}(t) = -1 - \f{\nu(t) \bar{\om}_{xxx}(0) }{\bar{\om}_x(0)}, \quad b = \sqrt{ \f{3}{8}}.
\eal
\eeq
We remark that $\bar{c}_{\om}(t)$ is time-dependent and $\bar{\om}$ is odd. We consider the equation of any perturbation $(\om, u_x, c_l, c_{\om})$ with $\om(0, \cdot)$ being odd around the above approximate steady state. Clearly, the equation preserves the property that $\om(t, \cdot)$ is odd during the evolution. The equation for the perturbation reads
\beq\label{eq:vislin}
\bal
\om_t  & = -   ( \bar{c}_l x +a \bar{u} ) \om_x + ( \bar{c}_{\om} + \bar{u}_x ) \om  + ( u_x + c_{\om} )  \bar{\om} - (  c_l x+ au ) \bar{\om}_x + \nu(t)  \om_{xx}  + N(\om) + F(\bar{\om},t ) \\
& \teq \cL(\om, \nu(t)) +   N(\om) + F(\bar{\om},t ),
\eal
\eeq
where the nonlinear and error terms are given by
\beq\label{eq:vis_NF}
\bal
N(\om) &=  - (c_l x + au) \om_x + (c_{\om} + u_x) \om, \\
  F(\bar{\om}, t) & = - ( a- \f{1}{2} )(\bar{u} - \bar{u}_x(0) x) \bar{\om}_x + \nu(t)  \bar{ \om}_{xx} .
 \eal
\eeq
We choose the following normalization condition for the perturbation
\beq\label{eq:normal_vis}
\bal
 & c_l(t)  = -a u_x(0)  , \quad   c_{\om}(t) = -u_x(0) - \nu(t) \f{\om_{xxx}(t, 0)}{ \bar{\om}_x(0) }. \\
\eal
\eeq
If the initial perturbation satisfies $\om_x(0,0) = 0$, $\om_x(t,0)$ remains $0$. In fact, if $\om_x(t,0) =0$, plugging \eqref{eq:vis_appro} and \eqref{eq:normal_vis} into \eqref{eq:DGdy_vis}, we get 
\[
\bal
\f{d}{dt} \om_x(t,0) &= \f{d}{dt} (\om_x(t,0) + \bar{\om}_x(0))
 = ( c_{\om}(t) + \bar{c}_{\om}  + u_x(0) + \bar{u}_x(0) )  (\om_x(t,0) + \bar{\om}_x(0)) \\
 &\quad - (c_l(t) + \bar{c}_l + a(u_x(0) + \bar{u}_x(0))  ) (\om_x(t,0) + \bar{\om}_x(0))
+ \nu(t) ( \om_{xxx}(t, 0) + \bar{\om}_{xxx}(0) )  \\
& = \bar{\om}_x(0) \lt(
 - 1 + \bar{u}_x(0)  - \nu(t) \f{\om_{xxx}(t,0) + \bar{\om}_{xxx}(0) }{\bar{\om}_x(0)} - \lt( \f{1}{3} + \f{1}{2} \bar{u}_x(0) \rt) \rt)  \\
 &\quad +\nu(t) ( \om_{xxx}(t, 0) + \bar{\om}_{xxx}(0) ) = 0,
\eal
\]
where we have used $\bar{u}_x(0) = \f{8}{3}$. Hence, $\om_x(t,0)=0$ is preserved.

We choose the following weights
\beq\label{eq:wg_vis}
\vp = \f{ (x^2 + b^2)^3 }{2b x^4} = -\f{1}{\bar{\om}} \f{x^2 + b^2}{x^3},  \quad \psi = \f{(x^2 + b^2)^3}{2b} = -\f{x(x^2 + b^2)}{\bar{\om}},
\eeq
and will perform weighted $L^2$ and weighted $H^4$ estimates to establish the nonlinear stability. Since $\om$ vanishes at least quadratically near $x=0$ for smooth $\om$, $\la\om^2, \vp \ra$ is well-defined. The $H^4$ estimate is to control $\om_{xxx}(t,0)$ appeared in $c_{\om}(t,0)$. 

In the following discussion, we assume $\om \in L^2(\vp) \cap H^4(\psi)$.

\subsection{Linear estimate}
We perform the weighted $L^2$ estimate as follows
\beq\label{eq:linl2}
\bal
\f{1}{2} \f{d}{dt} \la \om^2, \vp \ra & = \la - ( \bar{c}_l x +a \bar{u} ) \om_x +    ( \bar{c}_{\om} + \bar{u}_x ) \om, \om \vp \ra
 + \la ( u_x + c_{\om} )  \bar{\om} , \om  \vp \ra  - \la  (a  u + c_l x ) \bar{\om}_x ,\om \vp\ra   \\
 &  + \la \nu(t)  \om_{xx},  \om \vp \ra  + \la  N(\om) ,\om \vp\ra + \la F(\bar{\om}) , \om \vp \ra  \teq I + II + III +D + N + F .
\eal
\eeq

For $I$, we use integration by parts to get
\beq\label{eq:vislin_1}
I = \B\la \f{1}{2\vp} (  ( \bar{c}_l x +a  \bar{u} ) \vp )_x + ( \bar{c}_{\om} + \bar{u}_x ) ,\om^2 \vp \B\ra .
\eeq

Denote
\beq\label{eq:ux0}
\td{u}(x) = u(x) - u_x(0) x ,\quad \td{u}_x(x) \teq u_x(x) - u_x(0) .
\eeq

Recall the definition of $\vp$ in  \eqref{eq:wg_vis}. A direct calculation yields 
\[
\bal
II &=  \la ( u_x + c_{\om} )  \bar{\om} , \om  \vp \ra
 = - \B\la \lt(\td{u}_x - \nu(t) \f{\om_{xxx}(t, 0)}{ \bar{\om}_x(0) }  \rt), \om \f{x^2 + b^2}{x^3} \B\ra \\
& = -\B\la  \td{u}_x\om, \f{1}{x} + \f{b^2}{x^3} \B\ra + \nu(t)\f{\om_{xxx}(t, 0)}{ \bar{\om}_x(0) }  \B\la \om, \f{x^2 + b^2}{x^3} \B\ra.
\eal
\]
Using the cancellation \eqref{lem:vel3} and \eqref{lem:vel4},  we get
\[
II = - \f{\pi}{2} u_x^2(0) + \nu(t)\f{\om_{xxx}(t, 0)}{ \bar{\om}_x(0) }  \B\la \om, \f{x^2 + b^2}{x^3} \B\ra \leq  \nu(t)\f{\om_{xxx}(t, 0)}{ \bar{\om}_x(0) }  \B\la \om, \f{x^2 + b^2}{x^3} \B\ra.
\]
Note that
\[
u_x(0) =-\f{1}{\pi}\int_R \f{\om}{y} dy , \quad u_{xxx}(0) = -\f{1}{\pi} \int_R \f{\om_{xx}}{x} dx = -\f{2}{\pi} \int_R \f{\om}{x^3} dx,
\]
where we can use integration by parts twice for the second integral since $\om_x(0) =0$ by normalization condition and $\om_{xx}(0) = 0$ by odd-even symmetry. Hence
\beq\label{eq:vislin_2}
II \leq - \nu(t)\f{\om_{xxx}(t, 0)}{ \bar{\om}_x(0) }
\f{\pi}{2}\lt( 2u_x(0)  + b^2 u_{xxx}(0)\rt) .
\eeq

For $III$, we first use \eqref{eq:normal_vis} and \eqref{eq:ux0} to rewrite $c_l x + au$
\[
c_l x + a u = a u - a u_x(0) x = a \td{u}.
\]

Using the elementary inequality $|xy| \leq x^2 + \f{y^2}{4}$ yields
\[
\bal
| III | &  = | \la  (a  u + c_l x ) \bar{\om}_x ,\om \vp\ra  |=  a | \la  \td{u} \bar{\om}_x ,\om \vp \ra |  
\leq \B\la   \td{u}^2,  \f{ b}{x^4}    \B\ra + \f{a^2}{4 b} \B\la \om^2,  (\bar{\om}_x \vp)^2 x^4 \B\ra,
\eal
\]
where $b = \sqrt{\f{3}{8}}$ is the same constant used in \eqref{eq:vis_appro}. We can use the Hardy inequality \eqref{eq:hd1} with $p=2$ to control the velocity
\beq\label{eq:vislin_3}
| III| \leq \B\la \om^2,  \f{4 b}{9}\f{1}{x^2} \B\ra
+  \f{a^2}{4 b} \B\la \om^2,  (\bar{\om}_x \vp)^2 x^4 \B\ra
= \B\la  \om^2, \f{4 b}{9}\f{1}{x^2}  + \f{a^2}{4 b }  (\bar{\om}_x \vp)^2x^4  \B\ra.
\eeq

For $D$, we separate the singular and less singular part of the weight $\vp$ defined in \eqref{eq:wg_vis}
\[
D=   \nu(t) \B\la \om_{xx}, \om \f{ (x^2 + b^2)^3 }{2b x^4}  \B\ra
= \nu(t) \B\la \om_{xx}, \om  \f{ (x^2 + b^2)^3  - b^6 }{2b x^4}  \B\ra   + \nu(t) b^6 \la  \om_{xx}, \om x^{-4} \ra
\teq P_1 + P_2.
\]
Note that $\om(0)=  \om_x(0) = \om_{xx}(0) = 0$. Near the origin, we have
\[
\om = O(|x|^3) , \om_{x} = O(|x|^2) .
\]
Thus we can use integration by parts to estimate $P_1$
\[
\bal
P_1 &= - \nu(t) \B\la  \om_x^2 ,\f{ (x^2 + b^2)^3  - b^6 }{2b x^4} \B\ra
- \nu(t) \B\la \om_x , \om \lt( \f{ (x^2 + b^2)^3  - b^6 }{2b x^4} \rt)_x \B\ra \\
&\leq - \nu(t) \B\la \om_x , \om \lt( \f{ (x^2 + b^2)^3  - b^6 }{2b x^4} \rt)_x \B\ra
= \f{\nu(t)}{2} \B\la \om^2, \lt( \f{ (x^2 + b^2)^3  - b^6 }{2b x^4} \rt)_{xx} \B\ra \les \f{\nu(t)}{2} \la \om^2, \vp \ra,
\eal
\]
where we have used the following estimate to get the last inequality
\[
\B| \lt(\f{ (x^2 + b^2)^3 - b^6 }{2b x^4} \rt)_{xx}  \B| \les 1 + x^{-4} \les \vp .
\]
For $P_2$, since $\om(0) = \om_x(0) = \om_{xx}(0) =0 $, we can use Hardy inequality to get
\[
\la \om^2, x^{-6} \ra \les \la \om_x^2, x^{-4} \ra \les \la \om_{xx}^2, x^{-2} \ra \les || \om_{xxx}||_2^2.
\]

Using the Cauchy-Schwartz inequality and the above estimate, we get
\[
P_2 \leq \nu(t) \la \om_{xx}^2, x^{-2}\ra^{1/2} \la \om^{2}, x^{-6} \ra^{1/2} \les \nu(t) ||\om_{xxx}||_2^2.
\]

Combining the estimate of $P_1,P_2$, we yield
 \beq\label{eq:vislin_4}
D = P_1 +P_2 \les \nu(t) ( \la \om^2, \vp \ra +  || \om_{xxx}||_2^2  ).
 \eeq

Collecting the estimate \eqref{eq:vislin_1}, \eqref{eq:vislin_2} and \eqref{eq:vislin_3} and \eqref{eq:vislin_4}, we derive
\beq\label{eq:vislin_L20}
\bal
\f{1}{2}\f{d}{dt} \la \om^2, \vp \ra
&\leq
\B\la \f{1}{2\vp} (  ( \bar{c}_l x +a  \bar{u} ) \vp )_x + ( \bar{c}_{\om} + \bar{u}_x ) ,\om^2 \vp \B\ra
+ \B\la  \om^2,  
\f{4 b}{9}\f{1}{x^2}  + \f{a^2}{4 b }  (\bar{\om}_x \vp)^2x^4 
\B\ra   \\
&- \f{\pi}{2}  \nu(t)\f{\om_{xxx}(t, 0)}{ \bar{\om}_x(0) }
\lt( 2u_x(0)  + b^2 u_{xxx}(0)\rt)  \\
&+ C\nu(t) ( \la \om^2, \vp \ra +  || \om_{xxx}||_2^2  ) + \la  N(\om) ,\om \vp\ra + \la F(\bar{\om}, t) , \om \vp \ra .\\
\eal
\eeq
We remark that the second and the third line in \eqref{eq:vislin_L20} can be arbitrary small by choosing $\nu(0)$  and $a - \f{1}{2}$ to be small. 
Notice that we have analytic formulas \eqref{eq:vis_appro} and \eqref{eq:wg_vis} for $\bar{u}_x, \bar{c}_l , \bar{u}, \bar{c}_{\om},\bar{\om},\vp$ in the first line of \eqref{eq:vislin_L20}. Using the pointwise estimate \eqref{eq:vis_Sneg2} in Lemma \ref{lem:vis_dp} whose proof is elementary, we can further estimate the quantities in the first line on the right hand side of \eqref{eq:vislin_L20} and yield
\beq\label{eq:vislin_L2}
\bal
\f{1}{2}\f{d}{dt} \la \om^2, \vp \ra
&\leq  - (\f{1}{2} -C( |a- \f{1}{2}| +  \nu(t) ) ) \la \om^2, \vp \ra - \f{\pi}{2} \nu(t)\f{\om_{xxx}(t, 0)}{ \bar{\om}_x(0) }  \lt( 2u_x(0)  + b^2 u_{xxx}(0)\rt)  \\
&+ C\nu(t) ( \la \om^2, \vp \ra +  || \om_{xxx}||_2^2  ) +\la  N(\om) ,\om \vp\ra + \la F(\bar{\om}, t) , \om \vp \ra .\\
\eal
\eeq

\subsection{Weighted $H^4$ estimate}
Taking the $x$ derivative on both sides of \eqref{eq:vislin} four times and then multiplying 
$\pa_x^4\om \psi$ give
\beq\label{eq:linh4}
\bal
\f{1}{2}\f{d}{dt} \la (\pa_x^4\om)^2, \psi \ra &=  \la - \pa_x^4(( \bar{c}_l x +a \bar{u} ) \om_x)
+ \pa_x^4 (( \bar{c}_{\om} + \bar{u}_x ) \om ) , \pa_x^4 \om \psi\ra  
+  \la \pa_x^4 (( u_x + c_{\om} )  \bar{\om} ), \pa_x^4 \om \psi \ra \\
& - \la \pa_x^4( (  c_l x+ au ) \bar{\om}_x), \pa_x^4\om \psi\ra  
+ \nu(t)  \la \pa_x^6 \om , \pa_x^4 \om\psi \ra  \\
&+ \la \pa_x^4 N(\om), \pa_x^4\om \psi \ra + \la \pa_x^4 F(\bar{\om},t ) , \pa_x^4\om \psi \ra  \teq I + II + III + D_2 + N_2 + F_2.
\eal
\eeq
Denote  $f^{(k)} = \pa_x^k f$. 
For the terms in each inner product, we only keep track the perturbation term with the same or higher order than $\om^{(4)}$, i.e. $\om^{(4)},\om^{(5)}, \om^{(6)}$, $u^{(5)}$. We use the terminology \textit{l.o.t} to denote the lower order term.
For $I$, we have
\[
I = \la - (\bar{c}_l x +a \bar{u} ) \om^{(5)} - 4(\bar{c}_l + a\bar{u}_x) \om^{(4)} + (\bar{c}_{\om} + \bar{u}_x) \om^{(4)}, \om^{(4)} \psi \ra + \la l.o.t, \ \om^{(4)} \psi \ra.
\]
Using integration by parts, we get
\beq\label{eq:vislin_51}
I = \B\la \f{1}{2 \psi} (  (\bar{c}_l x + a \bar{u})\psi)_x  - (4 \bar{c}_l + 4 a \bar{u}_x - \bar{c}_{\om}-\bar{u}_x), (\om^{(4)})^2\psi \B\ra + \la l.o.t , \ (\om^{(4)})^2\psi \ra .
\eeq

Notice that we have analytic formulas \eqref{eq:vis_appro} and \eqref{eq:wg_vis} for $\bar{u}_x, \bar{c}_l , \bar{u}, \bar{c}_{\om},\psi$. Using the pointwise estimate \eqref{eq:vis_Sneg3} in Lemma \ref{lem:vis_dp}, 
we get
\beq\label{eq:vislin_5}
I \leq \B\la - \f{1}{3}+ C( |a-\f{1}{2}| +\nu(t) ) , (\om^{(4)})^2 \psi \B\ra + \la l.o.t., \ \om^{(4)} \psi\ra
\eeq
for some universal constant $C$.

For $II$, we have
\beq\label{eq:vislin_60}
\bal
II &=  \la \pa_x^4 (( u_x + c_{\om} )  \bar{\om} ), \pa_x^4\om \psi \ra = \la u^{(5)}\bar{\om},  \om^{(4)} \psi \ra + \la l.o.t. ,\ \om^{(4)} \psi \ra \\
&= - \la u^{(5)}  \om^{(4)} ,  x(x^2 + b^2)\ra + \la l.o.t. ,\ \om^{(4)} \psi \ra .
\eal
\eeq
Next, we show that the first term vanishes. Applying the cancellation \eqref{lem:vel5} with $
(u_{xx}, \om_x) $ replaced by $( u^{(5)} , \om^{(4)} ) $, we get
\beq\label{eq:vislin_61}
-  b^2 \la u^{(5)}   \om^{(4)} , x \ra = 0.
\eeq
Using integration by parts yields 
\[
 H(\om^{(4)}x)(0)  = - \f{1}{\pi} \int_R \om^{(4)} dx = 0 , \quad 
 H(\om^{(4)}x^2)(0)  = -\f{1}{\pi} \int_R \om^{(4)} x dx = \f{1}{\pi}
 \int_R \om^{(3)}  dx = 0.
\]
Note that $H(\om^{(4)})(x) = u^{(5)}$. Applying Lemma \ref{lem:vel0}, we obtain 
\[
\bal
 H( \om^{(4)} x) =x (H \om^{(4)} ) = x  u^{(5)}(x)  ,\quad  H( \om^{(4)}x^2) = x H(\om^{(4)} x)  = x^2 (H \om^{(4)} ) = x^2 u^{(5)}(x),
\eal
\]
Moreover, we have $( x^2u^{(5)} )(0) = (x^2 \om^{(4)})(0) = 0$. Therefore, applying Lemma \ref{lem:tric} with $\om = x^2 \om^{(4)}$, we yield 
\beq\label{eq:vislin_62}
\bal
 - \la u^{(5)}(x) \om^{(4)}, x^3 \ra  &= - \la   H(x^2 \om^{(4)})  \cdot ( x^2 \om^{(4)}), x^{-1} \ra  
 = \pi H(  H(x^2 \om^{(4)})  \cdot ( x^2 \om^{(4)}) )(0) \\
 & = \f{\pi}{2}   \lt(    H(x^2 \om^{(4)}) (0) ^2  - ( x^2 \om^{(4)})(0)^2 \rt) = 0.
 \eal
\eeq

Combining \eqref{eq:vislin_60},  \eqref{eq:vislin_61} and  \eqref{eq:vislin_62}, we can derive
\beq\label{eq:vislin_6}
II  =   \la l.o.t. ,\ \om^{(4)} \psi \ra .
\eeq
For $III$, it only involves $u^{(i)}$ for $i=0,1,2,3,4$. Hence
\beq\label{eq:vislin_7}
III  = \la l.o.t, \ \om^{(4)} \psi \ra.
\eeq

Recall the definition of $\psi$ in \eqref{eq:wg_vis}. 
For $D_2$, we use integration by parts to get
\beq\label{eq:vislin_8}
\bal
D_2 & = \nu(t)  \la  \pa_{xx} \om^{(4)} , \om^{(4)} \psi \ra = - \nu(t) \la  \om^{(5)}, \om^{(5)} \psi \ra
-\nu(t) \la \om^{(5)}, \om^{(4)} \psi_x \ra \\
& = - \nu(t) \la  \om^{(5)}, \om^{(5)} \psi \ra + \f{\nu(t)}{2} \la \om^{(4)}, \om^{(4)} \psi_{xx} \ra
\leq \f{\nu(t)}{2} \la \om^{(4)}, \om^{(4)} \psi_{xx} \ra \les \nu(t) \la (\om^{(4)} )^2, \psi \ra,
\eal
\eeq
where we have used the following estimate to derive the last inequality
\[
|\psi_{xx} | = \B| \lt( \f{(x^2 + b^2)^3}{2b} \rt)_{xx} \B| \les 1 + x^4 \les \psi.
\]

For all the lower order terms in $I, II, III$, we can use interpolation between
$\la \om^2, \vp\ra$ and $\la \om^{(4)}, \psi \ra$ to control them. Since $\bar{\om}$ is not singular and decays very fast $x^{-3}$ for large $x$, all the coefficients, i.e. $\bar{\om},\bar{u}, \psi$, in the interaction
$\la l.o.t. , \ \om^{(4)} \psi \ra $
are bounded by some absolute constant. 
We estimate a representative term. For 
 $\la \pa_x^4 (c_l x + au) \bar{\om}_x, \om^{(4)}\psi\ra$ in III,  we can estimate it as follows
 \[
|\la \pa_x^4(c_l x + au) \bar{\om}_x, \om^{(4)}\psi\ra | = a |\la u^{(4)} \bar{\om}_x, \om^{(4)} \psi \ra|
 \les || u^{(4)}||_2 || \bar{\om}_x \psi^{1/2}||_{\infty} \la (\om^{(4)})^2, \psi \ra^{1/2}.
 \]
Using \eqref{eq:vis_appro} and \eqref{eq:wg_vis}, we have
 \[
| \bar{\om}_x  \psi^{1/2} | \les \f{1}{1+x^4} (1 + x^2)^{3/2} \les 1.
 \]
 It follows
 \[
 \bal
\la \pa_x^4 (c_l x + au) \bar{\om}_x, \om^{(4)}\psi\ra &\les  || u^{(4)}||_2 ||  \la (\om^{(4)})^2, \psi \ra^{1/2}
 \les || \om^{(3)} \|_2 \la (\om^{(4)})^2, \psi \ra^{1/2} \\
 &\les || \om||^{1/4}_2 || \om^{(4)}||^{3/4}_2   \la (\om^{(4)})^2, \psi \ra^{1/2}
 \les \la \om^2, \vp \ra^{1/8} \la (\om^{(4)} )^2, \psi \ra^{7/8} .\\
 \eal
 \]
 Using $\e$-Young's inequality, we further obtain 
\[
 \la \pa_x^4(c_l x + au) \bar{\om}_x, \om^{(4)}\psi\ra \leq \e \la (\om^{(4)} )^2, \psi \ra + C(\e) \la \om^2, \vp\ra.
 \]
 for some constant $C(\e)$ depending on $\e$. Similarly, for all terms in $\la l.o.t., \ \om^{(4)}\psi \ra$, we have
 \beq\label{eq:vislin_9}
| \la l.o.t. , \ \om^{(4)} \psi \ra | \leq \e \la (\om^{(4)})^2, \psi \ra + C(\e) \la \om^2, \vp \ra .
 \eeq
Finally, collecting all the estimates \eqref{eq:vislin_5}, \eqref{eq:vislin_6}, \eqref{eq:vislin_7}, \eqref{eq:vislin_8} and \eqref{eq:vislin_9}, we obtain the weighted $H^4$ estimate up to the nonlinear term and the error
\[
\bal
\f{1}{2} \f{d}{dt} \la (\om^{(4)})^2, \psi \ra  &\leq
\B\la - \f{1}{3} + C( |a-1/2| +\nu(t) ) , (\om^{(4)})^2 \psi \B\ra + \e \la (\om^{(4)})^2, \psi \ra  \\
&+ C(\e) \la \om^2, \vp \ra  + \nu(t) \la (\om^{(4)})^2, \psi \ra  + \la  \pa^4_x N(\om), \om^{(4)} \psi \ra + \la  \pa_x^4 F(\bar{\om},t ) , \om^{(4)} \psi \ra ,\\
\eal
\]
where $C(\e)$ is some constant depending on $\e$. We choose $\e = \f{1}{12}$ so that $C(\e) = C(\f{1}{12})$ is a universal constant. The above inequality can be simplified as follows
\beq\label{eq:vislin_H4}
\bal
\f{1}{2} \f{d}{dt} \la (\om^{(4)})^2, \psi \ra & \leq
\B\la -\f{1}{4} + C_1 |a- \f{1}{2}| +\nu(t) ) , (\om^{(4)})^2 \psi \B\ra +  C_1\la \om^2, \vp \ra  \\
& + \la \pa_x^4 N(\om), \om^{(4)} \psi \ra + \la \pa_x^4 F(\bar{\om},t ) , \om^{(4)} \psi \ra ,\\
\eal
\eeq
for some universal constant $C_1 > 0$.

\subsection{Nonlinear stability}
Now we construct the main energy
\beq\label{eg}
E^2(t) = \la \om^2 ,\vp \ra + \mu \la (\om^{(4)})^2 ,\psi \ra , \quad \mu = \f{1}{8 C_1},
\eeq
where $C_1$ is chosen in \eqref{eq:vislin_H4}. With this fix energy, we can control the $L^{\infty}$ norm of many terms e.g.
\[
 || \pa^k_{x}\om||_{\infty}
\les || \om||_2 + || \om^{(4)}||_2 \les E(t), \quad || \pa_x^k u_x||_{\infty} \les E(t),
\]
for $k = 0,1,2,3$. We remark that $\om_{xxx}(0)$ appears in $c_{\om}(t)$. 

Recall the nonlinear term
$N(\om)$ defined in \eqref{eq:vis_NF}. Next, we control the nonlinear parts $N, N_2$ in the $L^2, H^4$ estimates \eqref{eq:linl2}, \eqref{eq:linh4}
\[
N = \la  N(\om) ,\om \vp\ra, \quad  N_2 =\la \pa_x^4 N(\om),   \pa_x^4\om \psi \ra.
\]

Consider a representative term 
\[
P = \la a u  \pa_x^{4} \om_x , \pa_x^4\om \psi \ra
\]
in the nonlinear part of the $H^4$ estimate. Using integration by parts yields
\[
|P| = \f{ |a|}{2} \B\la  \f{1}{\psi } (u\psi)_x , (\pa_x^4\om)^2 \psi \B\ra
\les  \B\la |u_x| + \f{|u|}{x} \f{ |x \psi_x|}{\psi}, (\pa_x^4\om)^2 \psi \B\ra
\les || u_x||_{\infty} \la (\pa_x^4\om)^2, \psi \ra \les E^3(t),
\]
where we have used the calculation in \eqref{eq:xpsi} to yield $ \f{|x\psi_x| }{\psi}\les 1 $ in the second inequality. We can estimate other terms in the nonlinear part $N, N_2$ similarly. Hence, we prove
\beq\label{eq:visnlin_N1}
|\la  N(\om) ,\om \vp\ra |\les (1 + \nu(t) )E^3(t) , \quad |\la \pa_x^4 N(\om), \om^{(4)} \psi \ra |\les (1 + \nu(t)) E^3(t) .
\eeq
Recall the error term in the weighted $L^2$ and weighted $H^4$ estimate
\beq\label{eq:visnlin_F1}
 \la F(\bar{\om}, t) , \om \vp \ra, \quad  \la \pa_x^4 F(\bar{\om},t ) , \pa_x^4 \om \psi \ra ,
\eeq
where $F(\bar{\om},t)$ is defined (\eqref{eq:vis_NF}). Using the formulas of $\bar{\om}, \bar{u}$ in \eqref{eq:vis_appro}, one can easily verify that 
\[
| F(\bar{\om}, t)| \les  (  | a - \f{1}{2} | + \nu(t)  )  \min( |x| , |x|^{-3} ).
\]
We use the Cauchy-Schwartz and the Hardy inequality
$\la \om^2, x^{-6} \ra \les || \om^{(3)}||_2^2 $
to get
\beq\label{eq:visnlin_F2}
\bal
 &\la F(\bar{\om}, t) , \om \vp \ra = \B\la F(\bar{\om},t), \om \f{ (x^2 + b^2)^3  }{2b x^4} \B\ra
\les \B\la  |F(\bar{\om},t) \om|, x^2 + \f{1}{x^4} \B\ra  \\
 \les &\la F(\bar{\om},t)^2,  x^4 \ra^{1/2}
\la \om^2,  1 \ra^{1/2} + \la F(\bar{\om}, t)^2, x^{-2} \ra^{1/2} \la \om^2, x^{-6}\ra^{1/2} \\
\les & (  | a - \f{1}{2}  | + \nu(t) )  \lt( || \om||_{L^2} + || \om^{(3)} ||_{L^2}   \rt) \les 
 (  | a - \f{1}{2}  | + \nu(t) ) E(t),
 \eal
\eeq
where in the last inequality we have used the interpolation between $\la \om^2, \vp \ra$ and $\la \om^{(4)}, \psi \ra$ to control $||\om^{(3)}||_{L^2}$. For the second error term in \eqref{eq:visnlin_F1}, the weight $\psi$ is not singular and the error $ \pa_x^4 F(\bar{\om},t ) $ decays sufficiently fast. Hence, we obtain 
\beq\label{eq:visnlin_F3}
 \la \pa_x^4 F(\bar{\om},t )  , \om^{(4)}\psi \ra \les \la ( \pa_x^4 F(\bar{\om},t ))^2, \psi \ra^{\f{1}{2}}
 \la ( \om^{(4)})^2, \psi \ra^{ \f{1}{2}} \les    (  | a - \f{1}{2} | + \nu(t)  )  E(t).
\eeq
For the remaining terms 
\[
- \f{\pi}{2}  \nu(t)\f{\om_{xxx}(t, 0)}{ \bar{\om}_x(0) }  \lt( 2u_x(0)  + b^2 u_{xxx}(0)\rt)  +  \nu(t)  || \om_{xxx}||_2^2   \\
\]
in the weighted $L^2$ estimate \eqref{eq:vislin_L2}, each term $\om_{xxx}(t,0), u_x(0), u_{xxx}(0) ,||\om_{xxx}||_2$ can be bounded by an interpolation between $\la \om^2, \vp \ra$ and $\la \om^{(4)}, \psi \ra$. Hence, we have
\beq\label{eq:visnlin_R}
- \f{\pi}{2}  \nu(t)\f{\om_{xxx}(t, 0)}{ \bar{\om}_x(0) }  \lt( 2u_x(0)  + b^2 u_{xxx}(0)\rt) 
+ \nu(t)  || \om_{xxx}||_2^2  \les \nu(t) E^2(t).
\eeq
Combining \eqref{eq:vislin_L2}, \eqref{eq:vislin_H4}, \eqref{eq:visnlin_N1}, \eqref{eq:visnlin_F2},\eqref{eq:visnlin_F3} and \eqref{eq:visnlin_R}, we prove
\beq\label{eq:vis_boot1}
\bal
\f{1}{2} \f{d}{dt} E^2(t) &= \f{1}{2} \f{d}{dt}\la \om^2 ,\vp \ra + \mu \la (\om^{(4)})^2
\leq - \lt( \f{1}{2}-C( |a-1/2| +  \nu(t) ) \rt) \la \om^2, \vp \ra + C\nu(t) E^2(t)  \\
&+ \mu\lt( \B\la -\f{1}{4} + C_1( |a-1/2| +\nu(t) ) , (\om^{(4)})^2 \psi \B\ra +  C_1\la \om^2, \vp \ra \rt) \\
&+C (1 + \nu(t)) E^3(t) + C  (  | a - \f{1}{2} | + \nu(t)   )  E(t)  \\
\eal
\eeq
for some universal constant $C$. Recall $\mu = \f{1}{8C_1}$.  We have
\[
\bal
& - \f{1}{2} +C( |a-\f{1}{2}| +  \nu(t) )   + \mu C_4  
 \leq - \f{1}{4}  + C ( |a- \f{1}{2}| +  \nu(t) ) .\\
\eal
\]
Therefore, we can simplify \eqref{eq:vis_boot1} as
\beq\label{eq:vis_boot2}
\bal
\f{1}{2} \f{d}{dt} E^2(t) & \leq \lt(-\f{1}{4} + C_2(  |a-1/2| +  \nu(t) ) \rt)
( \la \om^2, \vp \ra  + \mu \la (\om^{(4)})^2, \psi \ra ) +  \\
&+C_2 (1 + \nu(t)) E^3(t) + C_2( | a - \f{1}{2} | + \nu(t)  )  E(t) , 
\eal
\eeq
where $C_2 > 0$ is some absolute constant. From the definition of $c_l, c_{\om}$ in \eqref{eq:vis_appro}, \eqref{eq:normal_vis}, we obtain 
\beq\label{eq:vis_boot3}
\bal
\B|c_l(t) + \bar{c}_l - \f{1}{3} \B| &\leq C_3 ( |a-1/2| + E(t)), \\
\B| c_{\om}(t) + \bar{c}_{\om} + 1 \B|  &\leq C_3 ( |a- 1/2| + \nu(t) +(1 + \nu(t)) E(t) ) ,\\
2 (c_l(t) + \bar{c}_l) + c_{\om}(t) + \bar{c}_{\om} &\leq - \f{1}{3} + C_3  ( |a-1/2| + \nu(t) + E(t) ),
\eal
\eeq
for some universal constant $C_3$. Recall the definition of $\nu(t)$
\[
\nu(t)  = \exp\lt( \int_0^t 2 (c_l(t) + \bar{c}_l) + c_{\om}(t) + \bar{c}_{\om}   \rt) C_l(0)^{-2} C_{\om}(0)\nu.
\]
From \eqref{eq:vis_boot3}, we see that the integrand in the exponent is bounded above by $-1/3$ and a small term. Since $\nu \leq 1$, if we choose $C_{\om}(0) = 1$ and $C_l(0)$ to be sufficiently large, then $\nu(t)$ remains small. We choose $\d ,\nu_0 >0$ such that
\beq\label{eq:vis_boot4}
( 1 + C_2 + C_3 + C_2 C_3 + C_2^2) ( \d +  \nu_0 ) < \f{1}{1000} .
\eeq
For any diffusion coefficient $0 \leq \nu \leq 1$, we choose $C_{\om}(0) = 1, C_l(0)$ large enough such that $\nu(0) \leq C_l(0)^{-2} \leq \nu_0$.
For any $ \nu \leq 1$, $ |a - 1/2| \leq \d$, it is easy to check that the bootstrap assumption
\beq\label{eq:vis_boot5}
\bal
 E(t) & \leq 8 C_2 ( |a - 1/2| +  \nu(0) ) \teq E_0 ,\\
 \nu(t) &\leq \nu(0)  \leq \nu_0, 
\eal
\eeq
can be continued provided that $E(0) \leq E_0$. In fact, if \eqref{eq:vis_boot5} holds for $t \in [0, T)$, \eqref{eq:vis_boot3} and \eqref{eq:vis_boot4} give
\[
2 (c_l(t) + \bar{c}_l) + c_{\om}(t) + \bar{c}_{\om} < -\f{1}{3} + \f{1}{20} < -\f{1}{4} ,
\]
which implies 
\beq\label{eq:vis_boot6}
\bal
\nu(t) \leq \exp\lt( -\int_0^t \f{1}{4} dt \rt) \nu(0)  \leq 
\exp( -\f{t}{4}) \nu(0) \leq \nu(0).
\eal
\eeq
When $E(t) = E_0$, the right hand side of \eqref{eq:vis_boot2} is negative. Hence \eqref{eq:vis_boot5} holds for all $t$. As a result, \eqref{eq:vis_boot3}, \eqref{eq:vis_boot4} and the bootstrap result \eqref{eq:vis_boot5} imply
\[
c_l(t) + \bar{c}_l  > \f{1}{3} - \f{1}{20} > 0, \  c_{\om}(t) + \bar{c}_{\om} < -1 + \f{1}{20} < 0.
\]
Therefore, for small odd perturbation $\om$ with $E(0) < E_0$ (we fix $C_{\om}(0)=1$ and choose $C_l(0)$ large), the bootstrap result and
$ c_{\om}(t) + \bar{c}_{\om}  < -\f{1}{2}$ imply finite time blowup in the original physical space after rescaling the time variable. We remark that $E(0) < +\infty$ implies $\om_x(0,0) =0$ due to the singular weight $\vp$.

\begin{remark}
 \eqref{eq:vis_boot6}  implies that the diffusion term in \eqref{eq:DGdy_vis} vanishes as $t \to +\infty$.
\end{remark}

\subsection{Convergence to the self-similar solution}
We use an argument similar to that in \cite{chen2019finite} to establish convergence. Instead of rewriting the estimates, we only sketch some steps.

We choose odd perturbation $\om_0$ with $E(0)< E_0$. According to the nonlinear stability analysis and \eqref{eq:vis_boot5} in the previous Section, we have a-priori estimate $E(t) < E_0$ for all $t$, where $E(t)$ is defined in \eqref{eg}.

 We choose weight $\vp$ defined in \eqref{eq:wg_vis} for the $L^2$ estimate and 
\[
 \rho  \teq  - \f{x}{ \bar{\om} } =  \f{(x^2 + b^2)^2}{2b}
 \]
 for the $H^3$ estimate. Taking time derivative on the both sides of \eqref{eq:vislin} yields 
\beq\label{eq:t}
\om_{tt} = \cL( \om_t , \nu(t)) + \cL( \om,  \nu(t)_t) + \pa_t N(\om) + \pa_t F(\bar{\om}, t) .
\eeq
A key observation is that the right hand side is linear in $\om_t$. $F(\bar{\om}, t)$ is time-dependent and we have
\beq\label{eq:terror}
\pa_t F(\bar{\om}, t) =  \dot{\nu}(t) \bar{\om}_{xx}.
\eeq
From \eqref{eq:vis_boot6}, we know that $\nu(t)$ decays exponentially fast, so does $\pa_t F(\bar{\om}, t)$. 

In \eqref{eq:t}, we have some terms related to $\dot{\nu}(t) \teq \f{d}{dt} \nu(t)$. From \eqref{eq:nu}, we have
\[
\dot{\nu}(t) = ( 2 (c_l(t) + \bar{c}_l) + c_{\om}(t) + \bar{c}_{\om} )  \nu(t) .
\] 
Using \eqref{eq:vis_boot3}, the bootstrap result \eqref{eq:vis_boot5} and the smallness of  $|a-1/2| , \nu(0) , E(t) $, we obtain 
\beq\label{eq:vis_cvg1}
  |  \dot {\nu}(t) | \les  \nu(t).
\eeq

Using an argument similar to that in the $L^2, H^4$ estimates of $\om$ and the a-priori estimate \eqref{eq:vis_boot5}, one can perform the weighted $L^2$ and $H^3$ estimates for $\om_t$ and obtain an estimate similar to that in \eqref{eq:vis_boot2}
\[
\bal
 \f{1}{2} \f{d}{dt}G^2(t)  &
 \leq - \lt(\f{1}{4} - C_4( |a-1/2| + \nu(t)  )  \rt) G^2(t)  
+ C_4 (1 + \nu(t)) E(t)  G(t)^2   \\
&+ C_4 \nu(t)  ( E(t) G(t) + E^2(t) ) + C_4 \nu(t)G(t)  
= I + II + III + IV,
\eal
\]
where $C_4>0$ is absolute and $G(t)$ is the energy for $\om_t$ 
\[
 G^2(t) \teq  \la \om_t^2, \vp \ra +  \lam \la (\om^{(3)}_t)^2, \rho \ra.
\]
$I$ comes from the estimate of linear term $\cL(\om_t, \nu(t))$, $II$ corresponds to the estimate of nonlinear term, $III$ comes from the terms related to $\f{d}{dt} \nu(t)$ and we have used \eqref{eq:vis_cvg1} to control $\dot{\nu}(t)$ and $IV$ comes from the error term $\pa_t F(\bar{\om}, t)$ (see \eqref{eq:terror}).

By choosing $|a-1/2| < \d , \nu(0) \leq \nu_0$ sufficiently small (see the discussion in the paragraph after \eqref{eq:vis_boot4}), then using a-priori estimate \eqref{eq:vis_boot5} and the smallness of $E(t)$, we further obtain
\beq\label{eq:G}
\f{1}{2} \f{d}{dt}G^2(t)
\leq -  \f{1}{5}  G^2(t) + C_4 \nu(t) ( G(t) + 1)  \leq -\f{1}{6} G^2(t) + C_5 \nu(t)
\eeq
for some universal constant $C_5$. 

\begin{remark}
Compared to the differential inequality obtained in \cite{chen2019finite}, \eqref{eq:G} contains an extra term $C_5\nu(t)$ because the error term $F(\bar{\om}, t)$ is time-dependent (see \eqref{eq:terror}).
\end{remark}

Using $\nu(t) \leq \nu(0) \exp(- \f{t}{4})$ in \eqref{eq:vis_boot6} and then solving the above ODE inequality obtain $\exp(\f{t}{3}) G^2(t)  \leq C(\nu(0), G(0)) \exp(\f{t}{12})$, or equivalently,
\[
G^2(t) \leq  C(G(0), \nu(0)) \exp(- \f{t}{4}).
\]
The remaining steps are standard. Hence, we have exponential convergence to the self-similar profile in the dynamic rescaling equation provided that $|a-1/2|, \nu(0)$ are small enough. Since $\nu(t)$ converges to $0$, such profile is the same as the inviscid profile associated with $a$. This concludes the proof of Theorem \ref{thm:blowup_vis}.

\section{Global Well-Posedness and a Blowup Criterion for $a > -1$}\label{sec:GWP}
In this Section, we consider \eqref{eq:DG} with $a > -1, \nu >0$ and prove Theorem \ref{thm:GWP}.
Without loss of generality, we assume $\nu = 1$. Then \eqref{eq:DG} becomes 
\beq\label{eq:visDG}
\bal
\om_t + a u \om_x &= u_x \om -  \CL \om , \quad u_x = H \om .
\eal
\eeq

Recall the dissipative operator $\CL\om$ defined in Section \ref{sec:Lass}
\beq\label{eq:visop}
\CL \om(x) \teq P.V.\int  \f{ \om(x) - \om(x-y) }{ m(|y|)  |y|^2 } dy,
\eeq
where $m:  [0, + \infty)  \to [0, + \infty)$ satisfies the following assumption

\quad (a) $m$ is a non-decreasing function.

\quad (b) Slightly subcritical dissipation:
 \beq\label{eq:dispm}
\lim_{r \to 0+}  r m(r)^{1/2} \int_r^1 \f{1}{s^2 m(s)} ds = +\infty , \quad \lim_{r \to 0^+} m(r) = 0.
\eeq

Recall three classes of initial data with odd symmetry or satisfying some sign property. 

\quad Class 1. $\om_0$ has a fixed sign for all $x \in \R$,

\quad Class 2. $\om_0$ is odd and $\om \geq0 $ for $x> 0$,

\quad Class 3. $\om_0$ is odd and $\om \leq 0$ for $x >0$.

It is not difficult to verify that the above symmetry and the sign property in each class are preserved by \eqref{eq:visDG} during the evolution.

\begin{lem}\label{lem:vis_l1}
(a) For $\om_0$ in class 1 and 2, $\om(t, \cdot)$ satisfies
\beq\label{eq:vis_l11}
|| \om(t)||_{L^1} \leq || \om_0||_{L^1}.
\eeq
(b) For $\om_0$ in class 3, $\om(t, \cdot)$ satisfies
\beq\label{eq:vis_l12}
|| \om(t)||_{L^1} \lesssim \exp\lt( (1+a) \int_0^t u_x(s,0) ds \rt) \cdot  || \om_0||_{L^1}.
\eeq
\end{lem}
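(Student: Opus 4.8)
The plan is to track the evolution of $\|\om(t)\|_{L^1}$ by using the sign structure in each class. For $\om_0$ in class 1 or 2, the key observation is that $|\om|$ itself is (weakly) a subsolution of the equation in a suitable sense: writing the equation as $\om_t + au\om_x = u_x\om - \CL\om$, one multiplies by $\sgn(\om)$. The transport term $\int au\om_x \sgn(\om)\,dx$ integrates by parts to $-\int a u_x |\om|\,dx$ (using $\sgn(\om)\om_x = \pa_x|\om|$ a.e.), the stretching term contributes $\int u_x|\om|\,dx$, and the dissipative term satisfies the pointwise inequality $\sgn(\om(x))\,\CL\om(x) \geq \CL|\om|(x)$ — a standard fact for nonnegative kernels, which here follows because the kernel $1/(m(|y|)|y|^2)$ is nonnegative, so $\sgn(\om(x))(\om(x)-\om(x-y)) \geq |\om(x)| - |\om(x-y)|$. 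Integrating $\CL|\om|$ over $\R$ gives zero (by Fubini, since $\CL$ annihilates constants and the kernel is even). Therefore
\[
\f{d}{dt}\|\om\|_{L^1} \leq \int (1-a)u_x|\om|\,dx.
\]
Now the sign hypotheses enter: in class 1, $\om$ has a fixed sign, so $u_x = H\om$ — no, more directly, one uses that $\int u_x|\om|\,dx = \int (H\om)\,|\om|\,dx$; when $\om$ has a fixed sign $|\om| = \pm\om$, and $\int (H\om)\om\,dx = 0$ since $H$ is skew-adjoint. In class 2, $\om$ is odd and $\om\ge 0$ on $x>0$, so $|\om| = \sgn(x)\om$, which is even; then $\int (H\om)|\om|\,dx = \int (H\om)\sgn(x)\om\,dx$, and since $H\om$ is odd when $\om$ is... wait, $H$ maps odd to odd, so $(H\om)\sgn(x)$ is even, and the integrand $(H\om)\sgn(x)\om$ — hmm, this needs the cancellation identity $\int_{\R} H(f)\, g \, \sgn(x)\, dx$ type computation. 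The cleaner route for class 2 is to restrict to $x>0$: there $|\om|=\om$, and one shows $\int_0^\infty u_x\om\,dx \le 0$ using the odd symmetry and a Hilbert-transform cancellation (of the kind recorded as \eqref{lem:vel3}–\eqref{lem:vel5} in the appendix), or simply that $2\int_0^\infty (H\om)\om\,dx = \int_{\R}(H\om)\om\,dx = 0$ by oddness of both factors' product being even and $H$ skew-adjoint. Either way one concludes $\f{d}{dt}\|\om\|_{L^1}\le 0$, giving \eqref{eq:vis_l11}.

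For $\om_0$ in class 3 ($\om$ odd, $\om\le 0$ on $x>0$), the same $\sgn(\om)$-multiplication and dissipative inequality give
\[
\f{d}{dt}\|\om\|_{L^1} \leq (1-a)\int u_x|\om|\,dx.
\]
Here the sign of $\int u_x|\om|$ is not favorable, so instead I would bound it crudely. On $x>0$ we have $|\om|=-\om$, and I expect to use that $u_x(x,0)\ge 0$ for class-3 data (noted in the remark after Theorem~\ref{thm:GWP}) together with a bound of the form $u_x(x)\le u_x(0) + (\text{controlled remainder})$ — but more likely the intended argument splits $u_x = u_x(0) + \td u_x$ and shows $\int \td u_x |\om|\,dx$ has a sign, leaving $\f{d}{dt}\|\om\|_{L^1}\le (1-a)\,u_x(0,t)\,\|\om\|_{L^1}$ plus possibly a $u_x(0)\|\om\|_{L^1}$ contribution from the transport term that was not fully cancelled. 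Re-examining: the transport plus stretching gave exactly $(1-a)\int u_x|\om|$; writing $u_x = u_x(0)+\td u_x$ this is $(1-a)u_x(0)\|\om\|_{L^1} + (1-a)\int \td u_x|\om|$. To get the stated exponent $(1+a)$ rather than $(1-a)$, the transport term must instead be integrated by parts keeping a boundary-type term, or the identity $\int u_x|\om|\,dx = u_x(0)\|\om\|_{L^1} + \int \td u_x |\om|\,dx$ combines with a separate $2a\,u_x(0)\|\om\|_{L^1}$ coming from handling the convection near $x=0$ where $u(0)=0$ forces $u\sim u_x(0)x$. So the plan for (b) is: reduce to $\f{d}{dt}\|\om\|_{L^1} \le (1+a)u_x(0,t)\|\om\|_{L^1} + (1-a)\int\td u_x|\om|\,dx$, then show the last integral is $\le 0$ using the Hilbert-transform cancellation identities on the half-line for odd functions, and finally apply Grönwall.

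The main obstacle I anticipate is making the $\sgn(\om)$ multiplication and the pointwise dissipative inequality $\sgn(\om(x))\CL\om(x)\ge \CL|\om|(x)$ rigorous at the level of regularity available ($\om_0\in L^1\cap H^1$), since $\sgn(\om)$ is not smooth and $\om$ may vanish on sets of positive measure; the standard fix is to work with a smooth approximation $\beta_\delta(\om)$ of $|\om|$ (e.g. $\beta_\delta(s)=\sqrt{s^2+\delta^2}-\delta$), derive the inequality for $\f{d}{dt}\int\beta_\delta(\om)$, and pass $\delta\to 0$ — the convexity of $\beta_\delta$ is exactly what is needed for the dissipation term to have the right sign (Córdoba–Córdoba-type inequality: $\beta_\delta'(\om)\CL\om \ge \CL\beta_\delta(\om)$ for convex $\beta_\delta$). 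The second, smaller obstacle is pinning down the precise Hilbert-transform cancellations that give the sign of $\int\td u_x|\om|$ on each class; these should follow from the appendix lemmas \eqref{lem:vel3}–\eqref{lem:vel5} referenced earlier, applied to $\om$ restricted by its parity, so I would cite those rather than reprove them.
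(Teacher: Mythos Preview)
Your overall strategy is workable, but there is an arithmetic slip that is the source of all your later confusion: after moving the transport term to the right you get $(1+a)\int u_x|\om|$, not $(1-a)\int u_x|\om|$. With the correct coefficient there is nothing to reconcile for part (b), and the contortions you describe (``separate $2a\,u_x(0)\|\om\|_{L^1}$ coming from handling the convection near $x=0$'') are unnecessary.

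The paper's argument is simpler and avoids the $\sgn(\om)$/C\'ordoba--C\'ordoba machinery entirely: in each class $\om$ has a definite sign on the relevant domain ($\R$ for class~1, $(0,\infty)$ for classes~2 and~3), so one integrates the equation for $\om$ itself there. For class~1 this gives $\f{d}{dt}\int_\R\om=0$ directly. For classes~2 and~3 one integrates over $(0,\infty)$ and, using the oddness of $\om$, computes explicitly
\[
\int_0^\infty u_x\om\,dx \;=\; -\f{1}{\pi}\iint_{(0,\infty)^2}\f{\om(x)\om(y)}{x+y}\,dx\,dy,
\]
together with a similar half-line symmetrization showing the dissipation term has the right sign. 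From this double-integral formula the sign conclusions for class~2 and the Gr\"onwall bound for class~3 (via the pointwise estimate $-\f{2}{\pi}\int_0^\infty\f{\om(y)}{x+y}\,dy\le u_x(0)$, using $\f{1}{x+y}\le\f{1}{y}$) are immediate.

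Two of your specific steps do not go through as written. First, your ``simple'' route for class~2, invoking $\int_\R(H\om)\om=0$, is not relevant: you need the sign of $\int u_x|\om|$, not of $\int u_x\om$, and for odd $\om$ these are different (the former is $2\int_0^\infty u_x\om$, which requires the half-line double-integral identity above, not the skew-adjointness of $H$). Second, the appendix identities \eqref{lem:vel3}--\eqref{lem:vel5} that you cite all carry extra weights ($x^{-1}$, $x^{-3}$, or $x$) and are not the right tool here; the key input is really the explicit half-line computation just displayed. Your alternative idea for class~3 of splitting $u_x=u_x(0)+\td u_x$ and showing $\int\td u_x|\om|\le0$ can in fact be made to work (symmetrize and note $\f{1}{x}+\f{1}{y}-\f{1}{x+y}>0$), so once the $(1+a)$ coefficient is fixed your route closes, but it is strictly more effort than the paper's direct integration of $\om$.
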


\begin{proof}
For $\om_0$ in class 1, integrating \eqref{eq:visDG} on $\R$ and then using integration by parts yield
\[
\f{d}{dt} \int_{\R} \om dx = (1+a) \int_{\R} u_x \om -  \int_{\R} \cL(\om) dx = 0,
\]
where the last equality can be easily verified by symmetrizing the integral in the Hilbert transform and the dissipative operator $\cL$ \eqref{eq:visop}. Since $\om(t, \cdot)$ has a fixed sign, $|| \om||_{L^1}$ is conserved.

For $\om_0$ in class 2 and 3, integrating \eqref{eq:visDG} from $0$ to $\infty$ and then using integration by parts give
\beq\label{eq:tl1}
\bal
\f{d}{dt }\int_0^{\infty} \om dy
 = \int_0^{\infty} (1 + a) u_x \om dy - \int_0^{\infty} \cL(\om) dx 
 \teq I + II  .
\eal
\eeq
For $I$, since $(x-y)^{-1}$ is antisymmetric, we have
\[
\bal
I &= \f{1+a}{\pi} \int_0^{\infty} \int_0^{\infty} \lt( \f{1}{x- y} - \f{1}{x+y}\rt) \om(y)  \om(x) dx dy  \\
&= -\f{1+a}{\pi} \int_0^{\infty} \int_0^{\infty}
\f{ \om(x) \om(y)  }{x+y} dx dy  
= - \f{2(1+a)}{\pi} \int_0^{\infty} \om(x) \int_x^{\infty} \f{\om(y)}{x+y} dy.
\eal
\]
For $II$, symmetrizing the integral, we get 
\[
\bal
II & = -
\int_0^{\infty}\int_0^{\infty} \f{ \om(x) - \om(y)} { m( |x-y| ) |x-y|^2  } + \f{ \om(x) + \om(y)} { m( |x + y| ) |x + y|^2  }  d x d y   =
 -\int_0^{\infty}\int_0^{\infty}  \f{ \om(x) + \om(y)} { m( |x + y| ) |x + y|^2  }  d x d y . 
\eal
\]

For $\om_0$ in class 2, we have $\om(x), \om(y) \geq 0$ for $x, y \geq 0$ and then both $I$ and $II$ are non-positive. Hence, the time derivative in \eqref{eq:tl1} is non-positive and we prove
\[
 0 \leq \int_0^{\infty} \om(y) dy \leq \int_0^{\infty}\om_0 dy,
\]
which implies \eqref{eq:vis_l11}.

For $\om_0$ in class 3, we have $\om(x), \om(y) \leq 0$ for $x, y \geq 0$. It follows $II \geq 0$
and
\[
-\f{2}{\pi}\int_x^{\infty} \f{\om(t, y)}{x+y} dy
\leq -\f{2}{\pi} \int_0^{\infty} \f{\om(t, y)}{y} dy  = u_x(t, 0)  .
\]
Plugging the above estimates in \eqref{eq:tl1} and using $\om(x) \leq 0$ for $x \geq 0$, we prove
\[
\f{d}{dt}\int_0^{\infty} \om dy  \geq I = - \f{2(1+a)}{\pi} \int_0^{\infty} \om(x) \int_x^{\infty} \f{\om(y)}{x+y} dy \geq (1+a)u_x(0) \int_0^{\infty} \om(x) dx .
\]
Using the Gronwall inequality yields
\[
 0\leq \int_0^{\infty} ( -\om(t, y) ) dy \leq  \exp\lt( (1+a) \int_0^{\infty} u_x(s,0) ds\rt) \int_0^{\infty} ( -\om_0(y) ) dy.
\]
Note that $ || \om||_{L^1} = -2\int_0^{\infty} \om(y) dy$. We conclude \eqref{eq:vis_l12}.
\end{proof}

For the subcritical case, e.g. $\CL = \Lambda^{\al},\al > 1$, the proof of Theorem \ref{thm:GWP} follows from the standard energy estimate. For the case where $\CL$ is slightly stronger than $\Lambda$, we use the nonlinear maximum principles \cite{constantin2012nonlinear}.

\begin{proof}[Proof of Theorem \ref{thm:GWP}]
According to Lemma \ref{lem:vis_l1}, it suffices to prove that the solution exists as long as $|| \om||_{L^1}$ remains bounded. Based on the BKM type blowup criterion, see for instance \cite{beale1984remarks}, $ ||u_x||_{\infty}$ can be bounded by $|| \om||_{\infty}$ and a Sobolev extrapolation inequality with logarithmic correction. Hence, it suffices to control $|| \om||_{\infty}$ assuming that $||\om||_{L^1}$ remains bounded. Other steps are standard and hence omitted.

From the definition of the dissipative operator $\CL$, we have
\[
\bal
\om(x) \CL \om(x)  &= \int \f{ (\om(x) - \om(y)) \om(x) }{ |x-y|^2 m(|x-y|) } dy
= \f{1}{2} \int \f{ (\om(x) - \om(y))^2  }{ |x-y|^2 m(|x-y|) } dy   \\
&+ \f{1}{2} \int \f{ \om^2(x) - \om^2(y)}{ |x-y|^2 m(|x-y|) } dy   \teq  \f{1}{2} D(\om) + \f{1}{2} \CL(\om^2) ,
\eal
\]
where
\[
D(\om) =\int \f{ (\om(x) - \om(y))^2  }{ |x-y|^2 m(|x-y|) } dy .
\]
Multiplying $\om(x)$ on both sides of \eqref{eq:visDG} gives
\beq\label{eq:gwp_pf1}
\f{1}{2} \lt( \pa_t + a u \pa_x + \CL  \rt) \om^2 + \f{1}{2} D(\om) =  u_x(x) \om^2(x) .
\eeq
For some $\d >0$ to be determined, we have
\beq\label{eq:gwp_pf2}
\bal
&u_x(x)\om^2(x) = \f{\om^2(x)}{\pi} \int \f{\om(y)}{x- y} dy \\
=& \f{\om^2(x)}{\pi} \int_{|x-y| < \d} \f{ \om(y) -\om(x)}{ (x- y) }  dy + \f{\om^2(x)}{\pi}\int_{ |x-y| > \d} \f{\om(y)}{x-y}  dy \\
\leq &  \om^2(x)\lt( \int_{|x-y| < \d} \lt( \f{ \om(y) -\om(x)}{ (x- y) m(||x-y)^{1/2}} \rt)^2 dy\rt)^{1/2} \lt(\int_{|y|\leq \d} m(|y|) dy\rt)^{1/2}  + \f{\om^2(x)}{\d} ||\om||_{L^1}  \\
  \leq &\om^2(x) (D(\om))^{1/2} (\d m(\d))^{1/2} + \f{\om^2(x)}{\d} ||\om||_{L^1}.
  \leq \f{D(\om)}{4} + \d m(\d) \om^4(x) + \f{\om^2(x)}{\d} ||\om||_{L^1}.
 \eal
\eeq
We choose $\d(x)$ as follows
\beq\label{eq:vis_del}
\d^2 m(\d) = \f{ || \om||_{L^1} }{ 1 + \om^2(x) } , \ \textrm{  or equivalently  } \ \f{1}{\d} =  \f{ m(\d)^{1/2} ( \om(x)^2 + 1)^{1/2} }{ || \om||^{1/2}_{L^1}} .
\eeq
Since $m(r)$ is increasing, there exists a unique $\d$. Then \eqref{eq:gwp_pf2} can be reduced to
\beq\label{eq:gwp_pf3}
\bal
u_x(x) \om^2(x) &\leq \f{D(\om)}{4} + m(\d)^{1/2} \om^4(x)  \f{ ||\om||^{1/2}_{L^1} }{ ( \om^2(x) +1 )^{1/2}}
+ \f{ m(\d)^{1/2} ( \om(x)^2 + 1)^{1/2} }{ || \om||^{1/2}_{L^1}} ||\om||_{L^1} \om^2(x) \\
&\leq \f{D(\om)}{4} + 2 m(\d)^{1/2} ||\om||_{L^1} ( \om^2(x) + 1)^{3/2} .
\eal
\eeq
Next, we estimate a lower bound for $D(\om)$. We split the integral in $D(\om)$ into two parts
\beq\label{eq:gwp_pf41}
\bal
D(\om) &\geq \int_{ \d \leq |x-y| \leq  1} \f{ (\om(x) - \om(y))^2}{ |x-y|^2m(|x-y|) } dy \\
 &\geq \int_{ \d \leq |x - y| \leq 1} \f{ \om(x)^2}{ |x-y|^2m(|x-y|) } dy
-2 \om(x) \int_{|x-y | \geq \d }  \f{ \om(y)} {|x-y|^2m(|x-y| }dy    \\
&\geq 2 \om(x)^2 \int_{ \d \leq r \leq 1} \f{1}{r^2 m(r)} dr - \f{2| \om(x) | }{\d^2 m(\d) }  || \om||_{L^1}
\\
&= 2 \om(x)^2 \f{1}{ \d m(\d)^{1/2}} \tau(\d) - \f{2| \om(x) | }{\d^2 m(\d) }  || \om||_{L^1},
\eal
\eeq
where $\tau(r)$ is defined below
\[
\tau(r) \teq r m(r)^{1/2} \int_r^1 \f{1}{s^2 m(s)} ds.
\]
Using the definition of $\d$ in \eqref{eq:vis_del}, we yield
\beq\label{eq:gwp_pf4}
D(\om) \geq 2\tau(\d)\om(x)^2 \f{ (1 + \om^2(x))^{1/2} }{ || \om||_{L^1}^{1/2}} - 2|\om(x) |(1 + \om(x)^2 )
\geq 2 \tau(\d)\f{ |\om(x)|^3}{||\om||^{1/2}_{L^1}} - 2 (1+\om(x)^2)^{3/2}.
\eeq
Combining all the estimate \eqref{eq:gwp_pf1}, \eqref{eq:gwp_pf3} and \eqref{eq:gwp_pf4}, we derive
\[
\bal
&\f{1}{2} \lt( \pa_t + a u\pa_x + \CL  \rt) \om^2 + \f{D(\om)}{8}
\leq 2 m(\d)^{1/2} ||\om||_{L^1} ( \om^2(x) + 1)^{3/2}  - \f{D(\om)}{8} \\
 \leq& \lt(2 m(\d)^{1/2}||\om||_{L^1} + \f{1}{4}\rt)  ( \om^2(x) + 1)^{3/2}
- \f{1}{4} \tau(\d) \f{ |\om(x)|^{3} }{|| \om||_{L^1}}.
\eal
\]
At the maximal point $x_0 = \arg \max |\om|$, we get
\beq\label{eq:gwp_pf5}
\f{1}{2} \pa_t \om^2 \leq  \lt(2 m(\d)^{1/2}||\om||_{L^1} + \f{1}{4}\rt)  ( \om^2(x) + 1)^{3/2}
- \f{1}{4} \tau(\d) \f{ |\om(x)|^{3} }{|| \om||_{L^1}} .
\eeq

Let
\[
A(t) \teq  \max\lt( \exp\lt( (1+a)\int_0^t u_x(0) dt \rt), 1\rt).
\]
From Lemma \ref{lem:vis_l1}, we have $|| \om||_{L^1} \leq  A(t) || \om_0||_{L^1} $. From \eqref{eq:vis_del}, we have
\[
\d^2 m(\d) =\f{ ||\om||_{L^1}}{  \om^2(x) + 1} \leq \f{ A(t) || \om_0||_{L^1}}{  \om^2(x) + 1}.
\]
Therefore, if $\max  |\om(x) |\to +\infty $  , we get $\d \to 0$. According to the assumption \eqref{eq:dispm},
\[
m(\d)^{1/2} \to 0,\quad \tau(\d) \to +\infty.
\]
Therefore, the right hand side of \eqref{eq:gwp_pf5} is negative for sufficiently large $\om(x)$
\[
| \om(t,x) | > C( || \om_0 ||_{L^1},  m , a, \int_0^{t} u_x(s, 0) ds).
\]
It follows that  
$\sup_{ 0 \leq s < t} ||\om(s, \cdot) ||_{L^{\infty}}$ 
is bounded if $\sup_{0\leq s < t} ||\om||_{L^1} < + \infty$. 

 Using arguments similar to \cite{cordoba2004maximum} or symmetrizing the integral (see the proof of Proposition \ref{prop:lap}), we can obtain that $\cL$ is positive on $L^p$ estimates. In particular, it is positive in the $L^2$ estimate of $\om_x$. Therefore, passing from the $L^{\infty}$ estimate of $\om(t,\cdot)$ to the $H^1$ estimate follows from the standard argument, which can yield the well-posedness result. We omit it.
\end{proof}

\section{Global Well-Posedness for $a \leq -1$}\label{sec:vis_GWP}
In this Section, we prove Theorem \ref{thm:GWPc}. We consider  
\beq\label{eq:DG_vis2}
\bal
\om_t + a u \om_x &= u_x \om -  \Lam^{\g} \om  , \quad u_x = H \om ,
 \eal
\eeq
with $a \leq -1$, where $\g \in (0,2 ]$. 

Denote $p = |a| \geq 1$. Performing the $L^p$ estimate of \eqref{eq:DG_vis2} and using integration by parts yield
\beq\label{eq:ap}
\bal
\f{1}{p} \f{d}{dt} || \om ||^p_{L^p} 
 &= \int_{\R}  u_x |\om|^p  - \f{a}{p} u (|\om|^p)_x   dx - \int_R 
\om | \om|^{p-2} \Lam^{\g}\om  dx \\
& = \int_{\R} (1 + \f{a}{p}) u_x |\om|^p dx - \int_R 
\om | \om|^{p-2} \Lam^{\g}\om  dx  \leq 0,
\eal
\eeq
where we have used $1 + \f{a}{p} =0$ and Proposition \ref{prop:lap} below about the fractional Laplacian.

We mainly focus on the proof of the global well-posedness with critical or supercritical dissipation. The global well-posedness in the subcritical case, i.e. $\g > |a|^{-1} $ follows from the standard Sobolev energy estimates.

For $a<-1$, it seems difficult to apply the method of modulus of continuity \cite{kiselev2007global} or the nonlinear maximal principle \cite{constantin2012nonlinear} to establish the global well-posedness due to the vortex stretch term in \eqref{eq:DG_vis2}. 

Our proof of the global well-posedness is based on a-priori $L^{|a| + \e}$ estimate for sufficiently small $\e >0$ by exploiting the cancellation between the transport term and the vortex stretching term. Firstly, we need a positivity estimate about the fractional Laplacian, which has been used to derive \eqref{eq:ap}.

\begin{prop}\label{prop:lap}
For $\g \in [0,2], p  \geq 1 $, we have 
\[
\int_R \om | \om|^{p-2}  \Lam^{\g} \om d x \geq   C \min( p-1, \f{1}{p})   || \Lam^{\g/2} |\om|^{p/2} ||^2_{L^2}.
\]
\end{prop}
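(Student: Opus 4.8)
The plan is to use the pointwise Córdoba–Córdoba type inequality for the fractional Laplacian together with its integral (singular-kernel) representation. First I would recall that for $\g\in(0,2)$ one has the Balakrishnan formula
\[
\Lam^{\g}\om(x) = c_{\g}\, \mathrm{P.V.}\!\int_{\R} \frac{\om(x)-\om(y)}{|x-y|^{1+\g}}\,dy,
\]
with $c_{\g}>0$, and the endpoint cases $\g=0$ (trivial) and $\g=2$ ($\Lam^2=-\pa_{xx}$, handled by a direct integration by parts) can be treated separately. Writing $f=|\om|^{p/2}\,\sgn\om$ so that $\om|\om|^{p-2}=|\om|^{p-2}\om$ and $|\om|^{p/2}=|f|$, the left-hand side becomes, after symmetrizing the double integral in $x$ and $y$,
\[
\int_{\R}\om|\om|^{p-2}\Lam^{\g}\om\,dx = \frac{c_{\g}}{2}\iint_{\R\times\R}\frac{\big(\om(x)-\om(y)\big)\big(\om(x)|\om(x)|^{p-2}-\om(y)|\om(y)|^{p-2}\big)}{|x-y|^{1+\g}}\,dx\,dy.
\]

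Second, the core of the argument is the elementary pointwise inequality: for all real $a,b$,
\[
\big(a-b\big)\big(a|a|^{p-2}-b|b|^{p-2}\big)\ \geq\ C\min\!\big(p-1,\tfrac1p\big)\,\big(\,|a|^{p/2}\sgn a - |b|^{p/2}\sgn b\,\big)^{2},
\]
which I would prove by a one-variable calculus argument (reduce to $b=1$ by homogeneity and split into the cases where $a,b$ have the same or opposite sign; for same sign it is the standard convexity estimate for the map $t\mapsto t^{p-1}$ versus $t\mapsto t^{p/2}$, giving the $\min(p-1,1/p)$ constant; for opposite sign both factors have a clean sign and the bound is straightforward). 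Plugging this into the symmetrized double integral gives
\[
\int_{\R}\om|\om|^{p-2}\Lam^{\g}\om\,dx \ \geq\ C\min\!\big(p-1,\tfrac1p\big)\,\frac{c_{\g}}{2}\iint_{\R\times\R}\frac{\big(f(x)-f(y)\big)^2}{|x-y|^{1+\g}}\,dx\,dy,
\]
and the last double integral is exactly (a constant multiple of) $\|\Lam^{\g/2}f\|_{L^2}^2=\|\Lam^{\g/2}|\om|^{p/2}\|_{L^2}^2$ by the Gagliardo–Plancherel identity for the $\dot H^{\g/2}$ seminorm.

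Third, I would dispose of the endpoint $\g=2$ by hand: there $\int \om|\om|^{p-2}(-\om_{xx})\,dx=(p-1)\int|\om|^{p-2}\om_x^2\,dx=\tfrac{4(p-1)}{p^2}\int\big((|\om|^{p/2})_x\big)^2\,dx$, which is of the claimed form (and one checks $\tfrac{4(p-1)}{p^2}\gtrsim\min(p-1,1/p)$), and the $\g=0$ case is the identity. One should also note the constant $c_\g$ degenerates as $\g\to0^+$ and $\g\to2^-$; since we only claim a bound with a $\g$-independent constant we either restrict the constant $C$ to absorb $\inf_{\g\in[\d,2-\d]}c_\g$ on a fixed interval and treat the two endpoints separately, or — cleaner — keep the constant allowed to depend on $\g$ harmlessly since $\g$ is fixed in the application; I would state it with an absolute $C$ and remark that $c_\g$ is bounded below away from the endpoints, which are handled directly.

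The main obstacle is the pointwise inequality in the second step with the sharp-in-$p$ constant $\min(p-1,1/p)$: getting the right constant (rather than just \emph{some} positive constant) as $p\to1^+$ and as $p\to\infty$ requires care in the same-sign case, where one compares the increment of $t\mapsto t^{p-1}$ against the square of the increment of $t\mapsto t^{p/2}$ and must track how the convexity constant behaves at both extremes; the opposite-sign case and the reduction to the double-integral identity are routine. This is precisely the one-dimensional analogue of the estimate in \cite{constantin2012nonlinear}, and I would follow that line of reasoning.
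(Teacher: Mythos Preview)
Your approach is essentially the paper's: symmetrize the singular-integral representation, reduce to the pointwise inequality $(a-b)(a|a|^{p-2}-b|b|^{p-2})\geq C\min(p-1,p^{-1})(\,\cdot\,)^2$, and recognize the resulting Gagliardo double integral as the $\dot H^{\g/2}$ seminorm, with the endpoints $\g=0,2$ handled directly by integration by parts.

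Two minor corrections. First, your claimed equality $\|\Lam^{\g/2}f\|_{L^2}^2=\|\Lam^{\g/2}|\om|^{p/2}\|_{L^2}^2$ with $f=|\om|^{p/2}\sgn\om$ is in fact only the inequality $\geq$ (from $\big||a|-|b|\big|\leq|a-b|$), which is the direction you need; the paper avoids this extra step by stating the pointwise bound with $\big(|a|^{p/2}-|b|^{p/2}\big)^2$ on the right-hand side from the start. Second, your concern about $c_{\g}$ degenerating as $\g\to0^+,2^-$ is unnecessary: the \emph{same} constant $c_{\g}$ appears in the Gagliardo characterization of $\|\Lam^{\g/2}g\|_{L^2}^2$, so it cancels between the two sides and the final constant $C$ is absolute in $\g$.
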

Similar results on the torus $T^d$ when $p \geq 2$ is even have been established in e.g. 
\cite{constantin2015long}, \cite{cordoba2004maximum}. We defer the proof to Appendix \ref{app:lap}.

We separate the proof of Theorem \ref{thm:GWPc} for $a= -1$ and $a < -1$.

\subsection{Global Well-Posedness for $a  < -1$}
\begin{proof}
Denote $ p = |a| < 1$. From \eqref{eq:ap}, we know a-priori that $ \om(t, \cdot)$ is $L^p$ bounded. 

Let $ 0 < \e \leq 1$, $ (2p)^{-1} \leq \g_1 < p^{-1}$ to be determined and $q = p+\e$. We consider $\g \in [\g_1, p^{-1}]$.
Multiplying $\om |\om|^{q-2}$ on both sides of \eqref{eq:DG_vis2} and then performing $L^{q}$ estimate yield
\[
\bal
\f{1}{q} \f{d}{dt} || \om||^q_{L^q} 
&= \la u_x , | \om|^q  \ra 
-  \f{a}{q} \la u, ( |\om|^q )_x \ra  - \la \om |\om|^{q-2} , \Lam^{\g} \om \ra \\
& = \f{ \e }{q}  \la u_x , |\om|^q \ra -  \la \om |\om|^{q-2} , \Lam^{\g} \om \ra 
\teq I -  II, 
\eal
\]
where we have used integration by parts and $1 + \f{a}{q} = \f{q + a}{q} = \f{\e}{q}$ to obtain the second equality. The small parameter $\e$ is from the cancellation between the transport term and the vortex stretch term. Applying Proposition \ref{prop:lap} yields 
\[
II \geq C \min(q-1 , q^{-1}) || \Lam^{\g/2} |\om|^{q/2} ||^2_{L^2}
\geq C \min(p-1 , p^{-1}) || \Lam^{\g/2} |\om|^{q/2} ||^2_{L^2},
\]
where we have used $q = p+ \e \in [p, p+1]$. Applying the Sobolev embedding $\dot{H}^{\g/2} \hookrightarrow L^{\f{2}{1-\g}}$, we obtain 
\[
II \geq c_p ||  |\om|^{q/2} ||^2_{\dot{H}^{\g/2}} \geq c_p || |\om|^{q/2} ||^2_{L^{ \f{2}{1-\g}  } }
= c_p || \ |\om|^q ||_{L^{ \f{1}{1-\g}} },
\]
where we have used $ (2p)^{-1} \leq \g_1 \leq \g \leq p^{-1}$ so that the constant in the Sobolev embedding (the second inequality) only depends on $p$. Using the H\"older inequality, $L^{ \f{1}{\g}}$ boundedness of the Hilbert transform and $ \g^{-1} \in [p, 2p]$, we yield
\[
I 
\leq \f{\e}{q} || u_x||_{L^{ \f{1}{\g} }} || \ | \om|^q ||_{L^{\f{1}{1-\g}}}
\leq \e C_p || \om ||_{L^{ \f{1}{\g} }}  || \ | \om|^q ||_{L^{\f{1}{1-\g}}},
\]
where we have used $q^{-1} \leq C_p$. 

Combining the above estimates, we prove 
\beq\label{eq:GWP_boot}
\f{1}{q} \f{d}{dt} || \om||^q_{L^q}
\leq (  \e C_p  || \om ||_{L^{\f{1}{\g}}} - c_p ) || \ | \om|^q ||_{L^{\f{1}{1-\g}}}.
\eeq

Next, we determine the parameter $\g_1$ and $\e$. Notice that for any $m \in [p , \infty]$, the H\"older inequality and the simple bound $|| \om||_{L^{\infty}} \leq || \om ||_{H^1}$ imply
\beq\label{eq:interp}
|| \om ||_{L^m} \leq || \om ||_{L^p} + || \om||_{L^{\infty}}
\leq || \om ||_{L^2} + || \om||_{ H^1} .
\eeq
For any initial data $\om_0 \in L^{p} \cap H^1$, we choose $\e, \g_1$ as follows 
\[
\e = \min(1,  \f{c_p} { 4 C_p (1 + || \om_0 ||_{L^p} + || \om_0 ||_{H^1} ) } ) , \quad
\g_1 = \max( (2p)^{-1},  \f{1}{ p + \f{\e}{2}} ).
\]
Clearly, we have $ \e \in (0, 1],  \g_1 \in [ (2p)^{-1}, p^{-1} )$. For this $\g_1$ and any $ \g$ in $[\g_1, p^{-1}]$, we show that \eqref{eq:DG_vis2} with initial data $\om_0$ is globally well-posed. Firstly, we have 
\beq\label{eq:subcrit}
  p \leq \g^{-1} \leq \g_1^{-1} \leq p + \f{\e}{2} < q.
\eeq

Consider the bootstrap assumption $|| \om(t, \cdot) ||_{L^q} \leq || \om_0||_{L^q} $. Under the bootstrap assumption, using a-priori $L^p$ boundedness of $|| \om||_{L^p}$, a simple interpolation, \eqref{eq:interp} and the definition of $\e$, we have
\[
\bal
\e C_p || \om ||_{L^{\f{1}{\g}}}
&\leq \e C_p (  || \om||_{L^p}  + || \om ||_{L^q} )
\leq \e C_p (  || \om_0||_{L^p}  + || \om_0 ||_{L^q} ) \\
& \leq 2 \e C_p  ( || \om_0 ||_{L^p} + || \om_0||_{H^1} ) \leq \f{c_p}{2}.
\eal
\]

Combining \eqref{eq:GWP_boot}, we know that the bootstrap assumption can be continued. Hence, we obtain a-priori $L^q$ boundedness of $\om(t, \cdot)$. 

From \eqref{eq:subcrit}, we have $q \g > 1$. Therefore, this a-priori $L^q$ estimate is subcritical. 
The proof of the global well-posedness follows from the standard Sobolev energy estimates. We omit it.
\end{proof}

\subsection{Global Well-Posedness for $a  = -1 $}\label{sec:CCF}
We focus on the critical case of \eqref{eq:DG_vis2}, i.e. $\g = 1$. The proof is based on establishing $L^{1+\e}$ a-priori estimate.

\begin{proof}
Let $ 0 < \e \leq 1/4 $ to be determined and $q = 1 + \e$. From \eqref{eq:ap}, we have a-priori $L^1$ estimate of $\om(t,\cdot)$. Multiplying $\om |\om|^{q-2}$ on both sides of \eqref{eq:DG_vis2} ($a=-1, \g=1$) and then performing $L^q$ estimates, we yield 
\beq\label{eq:bootCCF}
\bal
\f{1}{q} \f{d}{dt} || \om||^q_{L^q} 
&= \la u_x , | \om|^q  \ra 
+  \f{1}{q} \la u, ( |\om|^q )_x \ra  - \la \om |\om|^{q-2} , \Lam^{\g} \om \ra \\
& = \f{ \e }{ q}  \la u_x , |\om|^q \ra -  \la \om |\om|^{q-2} , \Lam \om \ra 
\teq I -  D.
\eal
\eeq

For $I$, we cannot simply apply the H\"older inequality since when we bound 
$||u_x||_{L^m}$ by $||\om||_{L^m}$ 
with $m$ sufficiently close to $1$, we will have a large constant, which compensates the small parameter $\e$. We exploit the cancellation between $u_x$ and $|\om|^q$ for $q$ sufficiently close to $1$. Symmetrizing the integral, we get 
\[
\bal
\la u_x, |\om|^q \ra
&= \f{1}{\pi} P.V. \iint  \f{\om(y)}{x- y}  |\om(x)|^q dy dx
= \f{1}{2\pi} \iint \om(x) \om(y) \f{ \om(x) |\om|^{q-2} - \om(y) |\om|^{q-2} }{x- y} dx dy , \\
D & =  \la \om |\om|^{q-2} , \Lam \om \ra 
= C \cdot  P.V.\iint  \f{\om(x)  - \om(y)}{ |x-y|^2} \om(x) |\om(x)|^{q-2} dy dx  \\
& = C \iint \f{\om(x)  - \om(y)}{ |x-y|^2} ( \om(x) |\om(x)|^{q-2} - \om(y)  |\om(y)|^{q-2} ) dy dx ,
\eal
\]
where $C > 0$ is absolute. Next, we show that for any $X, Y $
\beq\label{eq:XY2}
(X- Y) ( X |X|^{q-2} - Y |Y|^{q-2}) 
\geq  ( X |X|^{q-2} - Y |Y|^{q-2})^2  |XY|^{\f{2-q}{2}}.
\eeq
Without loss of generality, we assume that $ |X | \geq |Y|$ and $X \geq  0$. Then we have 
$ X |X|^{q-2} - Y |Y|^{q-2} \geq 0 $ and it suffices to verify 
\[
S \teq  ( X - Y)  - 
( X |X|^{q-2} - Y |Y|^{q-2})  |XY|^{\f{2-q}{2}} \geq 0.
\]
It follows from the following identity 
\[
S = (|X|^{1-q/2} - |Y|^{1-q/2}) ( X |X|^{q/2- 1} + Y |Y|^{q/2 - 1}  ) \geq 0,
\]
where we have used $|X| \geq |Y|, X \geq 0 $ and that both terms are non-negative. 

Applying \eqref{eq:XY2} with $X = \om(x), Y= \om(y)$, we yield 
\[ 
D \geq C \iint \f{ ( \om(x) |\om(x)|^{q-2} - \om(y)  |\om(y)|^{q-2} )^2 }{ |x-y|^2}
|\om(x) \om(y)|^{\f{2-q}{2}} dx d y.
\]
Hence, using the Cauchy-Schwartz inequality, we prove 
\[
|\la u_x, |\om|^q \ra|
\les D^{1/2} \lt( \iint |\om(x)\om(y) |^{2-  \f{2-q}{2}} dx dy \rt)^{1/2}
\les D^{1/2} || \om ||^{\f{2+q}{2}}_{L^{ \f{2+q}{2}}} .
\]
Applying Proposition \ref{prop:lap} with $p = q = 1 + \e$, we have 
\[
|| \ | \om |^{q/2} ||_{\dot{H}^{1/2}}^2 \les \e^{-1} D. 
\]
Using the interpolation in Lemma \ref{lem:inter}, we obtain 
\[
|| \om ||^{\f{2+q}{2}}_{L^{ \f{2+q}{2}}} \les || \om||_{L^1} || \ | \om |^{q/2} ||_{\dot{H}^{1/2}}.
\]
Combining the above estimates, we derive 
\[
|\la u_x, |\om|^q \ra| \leq C_6 || \om||_{L^1} D^{1/2} (\e^{-1} D)^{1/2}
= C_6 \e^{-1/2} D || \om||_{L^1},
\]
where $C_6>0$ is absolute. Substituting the above estimate in \eqref{eq:bootCCF}, we prove
\[
\f{1}{q} \f{d}{dt} || \om||^q_{L^q} 
\leq ( \f{\e}{q} C_6 \e^{-1/2} || \om||_{L^1} - 1 ) D
\leq (C_6 \e^{1/2 } || \om_0||_{L^1} - 1) D,
\]
where we have used $q \geq 1$ and a-priori $L^1$ estimate  $|| \om(t,\cdot )||_{L^1} \leq || \om_0||_{L^1}$. Choosing $\e$ sufficiently small, we obtain that $|| \om(t, \cdot)||_{L^q}, q= 1 + \e$ is decreasing. 

With this a-priori $L^q$ estimate, the dissipation $\Lam$ is subcritical.
The proof of the global well-posedness follows from the standard Sobolev energy estimates. We omit the details.
\end{proof}

\vspace{0.2in}
\noindent
{\bf Acknowledgments.} The author would like to thank Thomas Hou for helpful comments on earlier version of this work. This research was supported in part by the NSF Grants DMS-1907977 and DMS-1912654.

\appendix
\section{}
\subsection{Properties of the Hilbert transform}
Throughout this section, we assume that $\om$ is smooth and decays sufficiently fast. The general case can be obtained easily by approximation. We list several properties of the Hilbert transform and we refer the reader to \cite{chen2019finite} for the proof. 

\begin{lem}[The Tricomi identity]\label{lem:tric}
 We have
\beq\label{eq:tric}
H( \om H\om)  =  \f{1}{2} (  (H \om)^2 - \om^2  ).
\eeq
\end{lem}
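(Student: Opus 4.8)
The plan is to deduce \eqref{eq:tric} from the boundary‑value‑of‑holomorphic‑functions (``analytic signal'') structure of the Hilbert transform, which renders the algebra transparent. Since $\om$ is smooth with sufficient decay, set $f \teq \om + i H\om$. One first checks that $f$ is the boundary value on $\R$ of the function $F(z) \teq \f{1}{\pi i}\int \f{\om(y)}{y-z}\,dy$, which is holomorphic on the upper half‑plane $\{\mathrm{Im}\, z>0\}$ and satisfies $F(z)\to 0$ as $|z|\to\infty$: by the Sokhotski--Plemelj formula the boundary value of $F$ is exactly $\om + iH\om$.

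The next step is the elementary lemma: if $g=u+iv$ ($u,v$ real) is the boundary value of a function holomorphic on the upper half‑plane and vanishing at infinity, then $Hg=-ig$. To see this, note that $\tilde G(z)\teq\f{1}{\pi i}\int\f{u(y)}{y-z}\,dy$ has boundary value $u+iHu$, so $G-\tilde G$ is holomorphic on the upper half‑plane, decays at infinity, and has purely imaginary boundary values; by Schwarz reflection it extends to an entire function vanishing at infinity, hence is identically zero. Thus $v=Hu$, and symmetrically $u=-Hv$, whence $Hg = Hu+iHv = v-iu = -ig$.

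Now apply this to $g=f^2=F^2\big|_{\R}$: since $F^2$ is again holomorphic on the upper half‑plane and decays at infinity, its boundary value $f^2=(\om^2-(H\om)^2)+2i\,\om H\om$ satisfies $H(f^2)=-if^2 = 2\om H\om - i(\om^2-(H\om)^2)$. On the other hand $H(f^2)=H(\om^2-(H\om)^2)+iH(2\om H\om)$ with both summands real. Matching imaginary parts gives $H(2\om H\om)=-(\om^2-(H\om)^2)=(H\om)^2-\om^2$, which is \eqref{eq:tric} after dividing by $2$; matching real parts gives the companion identity $H(\om^2-(H\om)^2)=2\om H\om$.

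The only point requiring care — and hence the main obstacle — is the justification of the two function‑theoretic facts used above: that $\om+iH\om$ extends holomorphically to the upper half‑plane with decay at infinity, and that this property is inherited by the pointwise square. Both are routine for Schwartz (or $L^2$ with mild decay) $\om$, and the identity passes to general $\om$ by density, exactly as noted in the excerpt. An alternative, purely real‑variable route works on the Fourier side: using $\widehat{H\om}=-i\,\sgn(\xi)\hat\om$ and expanding both sides of \eqref{eq:tric} as double integrals in frequency, the identity reduces — after symmetrizing in $\eta\leftrightarrow\xi-\eta$ — to the case check $\sgn(\xi)\bigl(\sgn\eta+\sgn(\xi-\eta)\bigr)=\sgn\eta\,\sgn(\xi-\eta)+1$ valid whenever $\eta+(\xi-\eta)=\xi$; this avoids complex analysis at the cost of slightly more bookkeeping with the convolutions.
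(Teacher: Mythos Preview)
Your proof is correct. Note, however, that the paper does not actually supply its own proof of the Tricomi identity: it simply states the lemma and refers the reader to \cite{chen2019finite}. Your argument via the analytic signal $f=\om+iH\om$ and the observation that $H$ acts as multiplication by $-i$ on boundary values of upper-half-plane holomorphic functions (applied to $f^2$) is the classical route to \eqref{eq:tric}, and the Fourier-side alternative you sketch is equally standard; either would serve perfectly well as the omitted proof.
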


\begin{lem}\label{lem:commute} Suppose that $u_x = H\om$ and $\f{\om}{x}$ is well-defined. Then
\beq\label{lem:vel0}
 (H\om)(x) = (H\om)(0) + x H\lt( \f{\om}{x}\rt) .
\eeq

\end{lem}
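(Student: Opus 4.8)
The plan is to prove \eqref{lem:vel0} by a direct partial-fraction manipulation of the singular integral defining the Hilbert transform; the hypothesis $u_x = H\om$ plays no role beyond fixing notation, while the assumption that $\om/x$ be well-defined is used only to guarantee that $H(\om/x)$ makes sense. With the convention $H\om(x) = \f{1}{\pi} P.V.\int \f{\om(y)}{x-y} dy$ used throughout the paper (so that $H\om(0) = -\f{1}{\pi} P.V.\int \f{\om(y)}{y} dy$), the identity \eqref{lem:vel0} is equivalent to
\[
x \cdot P.V.\int \f{\om(y)/y}{x-y} dy = P.V.\int \f{\om(y)}{x-y} dy + P.V.\int \f{\om(y)}{y} dy .
\]
First I would dispose of the case $x = 0$, where both sides of \eqref{lem:vel0} collapse to $H\om(0)$ because $x\, H(\om/x)(x) \to 0$ as $x \to 0$. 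For $x \neq 0$, the whole computation rests on the elementary partial-fraction identity
\[
\f{1}{y(x-y)} = \f{1}{x}\lt( \f{1}{y} + \f{1}{x-y} \rt), \qquad y \notin \{0, x\}.
\]

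The main step is to insert this decomposition into $x\, H(\om/x)(x) = \f{x}{\pi} P.V.\int \f{\om(y)}{y(x-y)} dy$: the prefactor $x$ cancels the $1/x$, and splitting the integral yields $\f{1}{\pi} P.V.\int \f{\om(y)}{y} dy + \f{1}{\pi} P.V.\int \f{\om(y)}{x-y} dy$, which equals $-H\om(0) + H\om(x)$ by the two definitions recalled above; rearranging gives \eqref{lem:vel0}. The only point requiring care — and the sole genuine obstacle — is the legitimacy of the split, since the integrand $\om(y)/(y(x-y))$ carries two distinct singularities, at $y=0$ and at $y=x$: one must check that $P.V.\int \om(y)/y\, dy$ and $P.V.\int \om(y)/(x-y)\, dy$ converge in their own right, so that the principal values may be realized via independent symmetric cutoffs about the two points. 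This follows from $\om$ being smooth and sufficiently decaying; and when $\om$ vanishes at $0$ (as is implied by $\om/x$ being well-defined) the integral $\int \om(y)/y\, dy$ is in fact absolutely convergent near the origin, making the split even more transparent. The remaining algebra is immediate, which completes the proof.
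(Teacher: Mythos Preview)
Your argument is correct: the partial-fraction identity $\f{1}{y(x-y)} = \f{1}{x}\lt(\f{1}{y}+\f{1}{x-y}\rt)$ together with the assumption that $\om/x$ is well-defined (hence $\om(0)=0$, so the singularity at $y=0$ is removable) makes the split of the principal value integral legitimate, and the algebra then gives \eqref{lem:vel0} immediately. The paper does not actually supply its own proof of this lemma; it simply refers the reader to \cite{chen2019finite}, so there is nothing to compare against beyond noting that your direct computation is the standard argument one would expect to find there.
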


The following cancellation results and estimates are crucial to establish the linear stability in Section \ref{sec:vis_blowup}. 

\begin{lem}\label{lem:vel_a1} Suppose $u_x = H\om$.
(a)  We have
\begin{align}
\int \f{  ( u_x - u_x(0) ) \om    }{  x  } & = \f{\pi}{2} ( u_x^2(0) + \om^2(0)) \geq 0 \label{lem:vel3} .
\end{align}
Furthermore, if $\om$ is odd (so does $u_{xx}$ due to the symmetry of Hilbert transform), then
\begin{align}
\int \f{  ( u_x - u_x(0) ) \om    }{  x^3  } & =  \f{\pi}{2} ( \om_x^2(0) -u^2_{xx}(0)) =\f{\pi}{2}  \om_x^2(0) \geq 0 \label{lem:vel4}.
\end{align}
In particular,  \eqref{lem:vel3} vanishes if $ u_x(0) = \om(x) =0$; \eqref{lem:vel4} vanishes if $\om_x(0) = 0 $ .

(b)  We have
\beq\label{lem:vel5}
   \int u_{xx} \om_x x  = 0  .
\eeq

(c) Hardy inequality: Suppose that $\om$ is odd and $\om_x(0)=0$. For $p = 2, 4$,
\beq\label{eq:hd1}
\int \f{  (u  - u_x(0) x)^2} { |x|^{p+2} }  \leq  \lt(\f{2}{p +1 } \rt)^2   \int \f{  (u_x - u_x(0))^2}{ |x|^{ p } } = \lt(\f{2}{p +1 } \rt)^2  \int   \f{  \om^2}{ |x|^{ p } }.
\eeq
\end{lem}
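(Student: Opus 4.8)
\emph{Proof sketch.} The plan is to reduce all of (a)--(c) to two structural inputs — the Tricomi identity (Lemma~\ref{lem:tric}) and the commutator identity \eqref{lem:vel0} — together with the elementary facts that $H$ is skew-adjoint and an $L^2$ isometry and that $(Hf)(0)=-\f{1}{\pi}\,\mathrm{P.V.}\!\int \f{f(y)}{y}\,dy$. Throughout write $u_x=H\om$ and, by differentiation, $u_{xx}=H\om_x$. For \eqref{lem:vel3}, evaluate Tricomi $H(\om H\om)=\f{1}{2}\big((H\om)^2-\om^2\big)$ at $x=0$ to obtain $-\f{1}{\pi}\int\f{\om u_x}{y}\,dy=\f{1}{2}\big(u_x^2(0)-\om^2(0)\big)$; since moreover $\int\f{\om}{y}\,dy=-\pi(H\om)(0)=-\pi u_x(0)$, splitting $\f{(u_x-u_x(0))\om}{x}=\f{u_x\om}{x}-u_x(0)\f{\om}{x}$ and integrating yields exactly $\f{\pi}{2}\big(u_x^2(0)+\om^2(0)\big)$, and the ``in particular'' claim is immediate.

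For \eqref{lem:vel4} I run the same argument on $\tilde\om:=\om/x$, which is even and smooth when $\om$ is odd. By \eqref{lem:vel0} one has $H\tilde\om=\f{u_x-u_x(0)}{x}$, hence $(H\tilde\om)(0)=u_{xx}(0)$, which vanishes because $u_x=H\om$ is even, and $\tilde\om(0)=\om_x(0)$; evaluating Tricomi for $\tilde\om$ at $0$ then gives $-\f{1}{\pi}\int\f{\om\,(u_x-u_x(0))}{y^3}\,dy=\f{1}{2}\big(u_{xx}^2(0)-\om_x^2(0)\big)$, which is the stated identity. For \eqref{lem:vel5}, set $f=\om_x$; using $\la Hf,g\ra=-\la f,Hg\ra$ and the elementary identity $H(xf)=xHf-\f{1}{\pi}\int f$ one gets $\int x f\,Hf\,dx=-\int f\,H(xf)\,dx=-\int xf\,Hf\,dx+\f{1}{\pi}\big(\int f\big)^2$, so $\int u_{xx}\om_x x\,dx=\f{1}{2\pi}\big(\int\om_x\,dx\big)^2=0$ by the decay of $\om$.

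For the Hardy inequality \eqref{eq:hd1} I first prove the equality on the right. By \eqref{lem:vel0}, $\f{u_x-u_x(0)}{x}=H(\om/x)$; applying \eqref{lem:vel0} once more to $\om/x$ (legitimate since $\om/x^2$ is smooth and decaying when $\om$ is odd and $\om_x(0)=0$) gives $H(\om/x)=H(\om/x)(0)+xH(\om/x^2)$, and $H(\om/x)(0)=-\f{1}{\pi}\int\f{\om}{y^2}\,dy=0$ by oddness, so $\f{u_x-u_x(0)}{x^2}=H(\om/x^2)$. Thus for $p=2,4$ one has $(u_x-u_x(0))/|x|^{p/2}=H\big(\om/|x|^{p/2}\big)$, and the $L^2$ isometry of $H$ gives $\int\f{(u_x-u_x(0))^2}{|x|^{p}}=\int\f{\om^2}{|x|^{p}}$. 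For the inequality itself, put $g:=u-u_x(0)x$ (with $u$ chosen odd, so $u(0)=0$), so $g_x=u_x-u_x(0)$ and $g(0)=g_x(0)=0$; oddness of $\om$ together with $\om_x(0)=0$ forces $g$ to vanish to third order at the origin. Writing $x^{-(p+2)}=-\f{1}{p+1}\f{d}{dx}x^{-(p+1)}$ and integrating by parts on $(0,\infty)$ — all boundary terms vanish by the above and by decay — gives $\int_0^\infty g^2 x^{-(p+2)}\,dx=\f{2}{p+1}\int_0^\infty \f{g}{x^{(p+2)/2}}\cdot\f{g_x}{x^{p/2}}\,dx$; Cauchy--Schwarz followed by absorbing one factor produces $\int_0^\infty g^2 x^{-(p+2)}\le\big(\f{2}{p+1}\big)^2\int_0^\infty g_x^2 x^{-p}$, and the identical estimate on $(-\infty,0)$ completes the proof.

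The argument is essentially all bookkeeping, and the only real point of care is that every principal value integral and every boundary term be legitimate; this is exactly where the hypotheses enter: ``$\om$ odd'' is what forces $(H\tilde\om)(0)=u_{xx}(0)=0$ and $H(\om/x)(0)=0$, while ``$\om_x(0)=0$'' (needed for the $p=4$ case) is what makes $\om/x^2\in L^2$ and the boundary term $g^2 x^{-(p+1)}\big|_{0}$ vanish.
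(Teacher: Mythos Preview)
Your argument is correct and is precisely the standard route: evaluate the Tricomi identity $H(\om H\om)=\tfrac12((H\om)^2-\om^2)$ at the origin, feed it through the commutator relation \eqref{lem:vel0} to pass from $\om$ to $\om/x$, use skew-adjointness of $H$ (together with $H(xf)=xHf-\tfrac1\pi\int f$) for part~(b), and run the usual integration-by-parts Hardy argument for part~(c) after identifying $(u_x-u_x(0))/x^k$ with $H(\om/x^k)$. The paper does not give its own proof of this lemma but refers to \cite{chen2019finite}; the computations there, and the ones the paper carries out in the main text around \eqref{eq:vislin_61}--\eqref{eq:vislin_62}, are exactly of the same flavor, so your approach coincides with the intended one. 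One tiny remark: in part~(c) the vanishing $g=u-u_x(0)x=O(x^3)$ at the origin already follows from the oddness of $\om$ alone (since $u_{xx}(0)=0$); the extra hypothesis $\om_x(0)=0$ is what you actually need to make $\om/x^2\in L^2$ for the $p=4$ equality on the right, not for the boundary term.
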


The first inequality in \eqref{eq:hd1} is the standard Hardy inequality \cite{Har52}.

\subsection{Estimates of the linearized operator}

We introduce $S, R$ defined below to simplify the quantities in \eqref{eq:vislin_L20} and \eqref{eq:vislin_51}
\beq\label{eq:damping}
\bal
S(\bar{c}_l, \bar{c}_{\om}, a, x) & \teq
\f{1}{2\vp} (  ( \bar{c}_l x +a  \bar{u} ) \vp )_x + ( \bar{c}_{\om} + \bar{u}_x )
+ \f{1}{\vp} \lt( \f{4 b}{9}\f{1}{x^2}  + \f{a^2}{4 b }  (\bar{\om}_x \vp)^2x^4   \rt), \\
R(\bar{c}_l, \bar{c}_{\om}, a, x) & \teq  \f{1}{2 \psi} (  (\bar{c}_l x + a \bar{u})\psi)_x  - (4 \bar{c}_l + 4 a \bar{u}_x - \bar{c}_{\om}-\bar{u}_x),
\eal
\eeq
where $ b = \sqrt{ \f{3}{8}}$, $\bar{c}_l, \bar{c}_{\om}, \bar{\om}, \bar{u}_x$ are given in \eqref{eq:vis_appro} and $\vp, \psi$ is given in \eqref{eq:wg_vis}.

\begin{lem}\label{lem:vis_dp}
(a) 
$S$ satisfies the following pointwise estimate
\beq\label{eq:vis_Sneg}
\bal
S\lt(  \f{1}{3} ,  -1,  \f{1}{2}, x\rt) 
=&   \f{1}{2\vp} \lt(  (  \f{x}{3} +  \f{\bar{u} }{2}) \vp \rt)_x + ( -1 + \bar{u}_x )  + \f{1}{\vp} \lt(  \f{4 b}{9}\f{1}{x^2}  + \f{ (\bar{\om}_x \vp)^2x^4}{16  b }   \rt) \leq  -\f{1}{2} .
\eal
\eeq
(b)
For any $x$, we have
\beq\label{eq:vis_Scont}
\B|\B|  \f{ (x\vp)_x}{2\vp} \B|\B|_{\infty} \les  1, \quad
\B|\B|  \f{ (\bar{u}\vp)_x}{2\vp} \B|\B|_{\infty}  \les 1 ,\quad 
\B|\B|\f{1}{\vp}  (\bar{\om}_x \vp)^2x^4 \B|\B|_{\infty} =|| \bar{\om}_x^2 \vp x^4 ||_{L^{\infty}} \les 1.
\eeq
In particular, 
for $a$ close to $\f{1}{2}$ and some universal constant $C > 0$, we have
\beq\label{eq:vis_Sneg2}
S(\bar{c}_l, \bar{c}_{\om}, a, x) 
\leq -1/2+ C( |a-1/2| +  \nu(t) ).
\eeq
(c) For $a$ close to $\f{1}{2}$ and some universal constant $C > 0$, we have
\beq\label{eq:vis_Sneg3}
R(\bar{c}_l, \bar{c}_{\om}, a, x) \leq - \f{1}{3} + C( |a-1/2| +  \nu(t) ).
\eeq
\end{lem}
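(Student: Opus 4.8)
The plan is to prove all three parts of Lemma~\ref{lem:vis_dp} by direct computation, exploiting that every object entering $S$ and $R$ --- namely $\bar{\om},\bar{u},\bar{u}_x,\bar{\om}_x,\vp,\psi$ --- is the explicit rational function given in \eqref{eq:vis_appro} and \eqref{eq:wg_vis}. The one tool used repeatedly is the logarithmic derivative of the weights: from \eqref{eq:wg_vis} one has
\[
\f{\vp_x}{\vp} = \f{6x}{x^2+b^2} - \f{4}{x}, \qquad \f{\psi_x}{\psi} = \f{6x}{x^2+b^2},
\]
so that $\tfrac{1}{\vp}(f\vp)_x = f_x + f\,\tfrac{\vp_x}{\vp}$ and likewise for $\psi$. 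Substituting $f=\bar{c}_l x+a\bar u$ and putting everything over a common denominator (a power of $x^2+b^2$) turns $S$ and $R$ into rational functions of $y:=x^2\ge 0$, after which every inequality is a polynomial sign question settled using $b^2=\tfrac38$.

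\emph{Part (a).} With $\bar c_l=\tfrac13$, $\bar c_{\om}=-1$, $a=\tfrac12$ I first record the clean identities $\bar\om_x\vp=(3x^2-b^2)/x^4$, hence $\tfrac1\vp(\bar\om_x\vp)^2x^4=(3x^2-b^2)^2/(8(x^2+b^2)^3)$ and $\tfrac1\vp\cdot\tfrac{4b}{9x^2}=8b^2x^2/(9(x^2+b^2)^3)$, and compute the transport part $\tfrac1{2\vp}\bigl((\tfrac x3+\tfrac{\bar u}{2})\vp\bigr)_x$ from the log-derivative formula. Collecting terms and cancelling with $b^2=\tfrac38$, I expect the whole expression to collapse to
\[
S\bigl(\tfrac13,-1,\tfrac12,x\bigr)\;=\;-\tfrac12-\f{5x^2}{12\,(x^2+b^2)^3}\;\le\;-\tfrac12,
\]
which is \eqref{eq:vis_Sneg}. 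Even without this precise residual, $S(\tfrac13,-1,\tfrac12,\cdot)+\tfrac12$ is a rational function $N(y)/(y+b^2)^3$ with $N$ of degree at most $2$ vanishing at $y=0$, so the claim reduces to the nonpositivity of a polynomial of degree at most $1$ on $[0,\infty)$, which is immediate once its two coefficients are computed. I note the bound is \emph{saturated} at $x=0$ and as $x\to\infty$, so the damping constant $\tfrac12$ is optimal --- this is why the construction is forced to produce exactly this value in \eqref{eq:vislin_L20}.

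\emph{Part (b).} First prove \eqref{eq:vis_Scont}: from the log-derivative formula, $\tfrac{(x\vp)_x}{2\vp}=\tfrac12\bigl(-3+\tfrac{6x^2}{x^2+b^2}\bigr)$ and $\tfrac{(\bar u\vp)_x}{2\vp}=\tfrac12\bigl(\bar u_x+\tfrac{6x^2}{(x^2+b^2)^2}-\tfrac{4}{x^2+b^2}\bigr)$ are visibly bounded on $\R$, and $\bar\om_x^2\vp x^4=2b(b^2-3x^2)^2/(x^2+b^2)^3$ is continuous, equals $2/b$ at $x=0$ and is $O(x^{-2})$ as $x\to\infty$, hence bounded. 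For the corollary \eqref{eq:vis_Sneg2}, split $\bar c_l x+a\bar u=(\tfrac x3+\tfrac{\bar u}{2})+(a-\tfrac12)(\bar u-\tfrac83 x)$ using $\bar u_x(0)=\tfrac83$, split $\bar c_{\om}+1=-\nu(t)\bar\om_{xxx}(0)/\bar\om_x(0)$ which is a constant times $\nu(t)$, and split $\tfrac{a^2}{4b}=\tfrac1{16b}+(a-\tfrac12)\tfrac{a+1/2}{4b}$; this gives
\[
S(\bar c_l,\bar c_{\om},a,x)=S\bigl(\tfrac13,-1,\tfrac12,x\bigr)+(a-\tfrac12)\,E_1(x)+\nu(t)\,E_2(x),
\]
where $E_1$ is built from the bounded functions $\tfrac{(x\vp)_x}{\vp},\tfrac{(\bar u\vp)_x}{\vp},\bar u_x,\bar\om_x^2\vp x^4$ and $E_2$ is a bounded multiple of $1$; by \eqref{eq:vis_Scont} both $\|E_1\|_\infty,\|E_2\|_\infty\lesssim 1$ uniformly for $a$ near $\tfrac12$. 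Combined with part (a), \eqref{eq:vis_Sneg2} follows.

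\emph{Part (c).} Repeat the computation of (a) for $R$ with the simpler weight $\psi$ (so $\psi_x/\psi=6x/(x^2+b^2)$): one gets
\[
R\bigl(\tfrac13,-1,\tfrac12,x\bigr)=-\tfrac{13}{6}+\f{9x^2-3b^2}{4(x^2+b^2)^2}+\f{x^2}{x^2+b^2}.
\]
Clearing denominators with $y=x^2$, the bound $R(\tfrac13,-1,\tfrac12,x)\le-\tfrac13$ becomes $10y^2+(32b^2-27)y+22b^4+9b^2\ge 0$, i.e.\ $10y^2-15y+\tfrac{207}{32}\ge 0$ when $b^2=\tfrac38$, which holds since its discriminant $225-4\cdot10\cdot\tfrac{207}{32}<0$. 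The perturbation step is identical to (b): writing $R(\bar c_l,\bar c_{\om},a,x)=R(\tfrac13,-1,\tfrac12,x)+(a-\tfrac12)\tilde E_1(x)+\nu(t)\tilde E_2(x)$, the analogues of \eqref{eq:vis_Scont} for $\psi$ (now $\tfrac{x\psi_x}{\psi}=\tfrac{6x^2}{x^2+b^2}$ is bounded) make $\tilde E_1,\tilde E_2$ uniformly bounded, yielding \eqref{eq:vis_Sneg3}. There is no conceptual obstacle: the proof is pure bookkeeping, and the only mildly delicate point is in part (c), where a crude term-by-term bound on $R(\tfrac13,-1,\tfrac12,x)$ only yields $R\le\tfrac13$, so one must first combine the two fractions over a common denominator and reduce to the quadratic above; one should also be careful to keep all error terms strictly linear in $a-\tfrac12$ and $\nu(t)$ with coefficients controlled uniformly on a fixed neighborhood of $a=\tfrac12$ and $0\le\nu(t)\le\nu_0$.
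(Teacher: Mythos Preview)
Your proposal is correct and follows essentially the same approach as the paper: both compute the logarithmic derivatives $(x\vp)_x/\vp$, $(\bar u\vp)_x/\vp$, $\bar\om_x^2\vp x^4$ (and their $\psi$-analogues) explicitly from \eqref{eq:vis_appro}--\eqref{eq:wg_vis}, reduce $S+\tfrac12$ and $R+\tfrac13$ at the base point $(\tfrac13,-1,\tfrac12)$ to a single rational function in $y=x^2$ with denominator a power of $y+b^2$, and then check the sign of the resulting polynomial numerator using $b^2=\tfrac38$; the perturbation to general $(\bar c_l,\bar c_\om,a)$ is handled identically by both via the $L^\infty$ bounds \eqref{eq:vis_Scont}. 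Your closed form $S(\tfrac13,-1,\tfrac12,x)+\tfrac12=-\tfrac{5x^2}{12(x^2+b^2)^3}$ agrees with the paper's intermediate expression $-80b^2x^2/(72(x^2+b^2)^3)$ once $b^2=\tfrac38$ is substituted, and your quadratic $10y^2-15y+\tfrac{207}{32}$ in part (c) is exactly $-3$ times the paper's $I$.
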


\begin{proof}[Proof of \eqref{eq:vis_Sneg}]
 Denote 
\beq\label{eq:vis_S12}
 S_1(x)  = \f{1}{2\vp} (  (  \f{x}{3} +  \f{\bar{u}}{2}  ) \vp )_x + ( -1 + \bar{u}_x )  + \f{1}{2},
 \quad
 S_2(x) =   \f{1}{\vp} \lt(  \f{4 b}{9}\f{1}{x^2}  + \f{  (\bar{\om}_x \vp)^2x^4}{16  b }   \rt)  .
\eeq

Recall the definition of $\vp, \bar{u}, \bar{\om}$ in \eqref{eq:wg_vis} and \eqref{eq:vis_appro}. A direct calculation yields 
\[
\bal
\f{ (x \vp)_x }{\vp}&=  \f{x^4}{ (x^2 + b^2)^3}  \lt( \f{(x^2 + b^2)^3}{x^3} \rt)_x 
=  \f{x^4}{ (x^2 + b^2)^3}  \lt( \f{6 x(x^2 + b^2)^2 }{ x^3} 
- \f{ 3 x(x^2 + b^2)^3}{x^4} 
\rt) \\
& =  \f{6 x^2} {x^2 + b^2} - 3  = \f{3 x^2 - 3 b^2}{x^2 + b^2}, \\
\f{ (\bar{u} \vp)_x}{\vp} & = \f{x^4}{ (x^2 + b^2)^3} \lt(\f{(x^2 + b^2)^2}{x^3}  \rt)_x 
= \f{x^4}{ (x^2 + b^2)^3} \lt(\f{4x (x^2 + b^2)}{x^3} - \f{3  (x^2 + b^2)^2}{x^4}  \rt)  \\
& = \f{4x^2}{ (x^2 + b^2)^2}  - \f{3}{x^2 + b^2} =  \f{x^2 - 3 b^2}{ (x^2 + b^2)^2}, \\
\bar{\om}^2_x \vp x^4 & = \lt( \lt( \f{-2bx}{ (b^2 + x^2)^2} \rt)_x\rt)^2 \f{(x^2+b^2)^3}{2b} 
= \lt( \f{-2b}{ (x^2 + b^2)^2}  + \f{2bx \cdot 4 x}{ (b^2 + x^2)^3} \rt)^2 \f{(x^2+b^2)^3}{2b}  \\
& = \lt( \f{ 2 b ( 3 x^2 - b^2)}{ (x^2 + b^2)^3} \rt)^2 \f{(x^2+b^2)^3}{2b}
= \f{2b(3x^2 - b^2)^2}{ (x^2 + b^2)^3},
\eal
\] 
where $ b = \sqrt{\f{3}{8}}$. Using the above calculations and a simple estimate yields \eqref{eq:vis_Scont}.

Plugging the above computations and the formula of $\bar{u}_x$ given in \eqref{eq:vis_appro} in $S_1(x)$, we get 
\[
\bal
S_1(x) &= \f{1}{6} \f{3 x^2 - 3 b^2}{x^2 + b^2} + \f{1}{4} \f{x^2 - 3 b^2}{ (x^2 + b^2)^2} + \f{b^2 - x^2}{(x^2 + b^2)^2} - \f{1}{2} 
= \f{-2 b^2}{ 2(x^2 + b^2)} + \f{ b^2 - 3 x^2}{ 4(x^2+b^2)^2}  \\
& = \f{ -4 b^2(x^2 + b^2) + b^2 - 3x^2}{ 4(x^2 + b^2)^2} = 
\f{ -\f{3}{2}(x^2 + b^2) + b^2 -3x^2}{ 4(x^2 + b^2)^2} =  \f{ -\f{1}{2} b^2 - \f{9}{2}x^2}{{ 4(x^2 + b^2)^2}},
\eal
\]
where we have used $b^2 = \f{3}{8}$. Using the calculations about $\bar{\om}_x^2 \vp x^4$, we derive
\[
S_2(x) = \f{2 b x^4}{ (x^2 + b^2)^3} \f{4b}{9 x^2 } + \f{1}{16 b} \f{2b(3x^2 - b^2)^2}{ (x^2 + b^2)^3} = \f{ 64 b^2 x^2 + 9 (3 x^2 - b^2 )^2 }{72(x^2 + b^2)^3}
= \f{ 9 b^4 + 81 x^4 + 10 b^2 x^2 }{72(x^2 + b^2)^3}.
\]
From \eqref{eq:vis_Sneg} and  \eqref{eq:vis_S12}, we have $S(\f{1}{3}, -1, \f{1}{2}, x) + \f{1}{2} = S_1(x) + S_2(x)$. To prove \eqref{eq:vis_Sneg}, it suffices to verify that $S_1(x) +S_2(x) \leq 0 $, which is equivalent to
\[
(-\f{1}{2} b^2 -\f{9}{2}x^2) \cdot 18 (x^2 + b^2) + 9 b^4 + 81 x^4 + 10 b^2 x^2 
=  ( 10 -9  - 81)	b^2 x^2 \leq 0.
\]
Hence, we yield \eqref{eq:vis_Sneg}. 

Recall the definition of $\bar{c}_{\om}, \bar{c}_l, S$ in \eqref{eq:vis_appro}, \eqref{eq:damping}. Clearly, we have 
\[
\bal
S(\bar{c}_l, \bar{c}_{\om}, a, x) & - S(\f{1}{3}, -1, \f{1}{2}, x)
= (\bar{c}_l - \f{1}{3}) \f{ (x \vp)_x}{2 \vp}  + (a- \f{1}{2}) \f{ ( \bar{u} \vp)_x}{ 2\vp}
+ (\bar{c}_{\om} + 1) + (a^2 - \f{1}{4}) \f{1}{4b} \bar{\om}^2_x \vp x^4  \\
& = - (a-\f{1}{2}) \bar{u}_x(0) \f{ (x \vp)_x}{2 \vp} 
+ (a- \f{1}{2}) \f{ ( \bar{u} \vp)_x}{ 2\vp}
 - \f{\nu(t) \bar{\om}_{xxx}(0) }{\bar{\om}_x(0)} + (a^2 - \f{1}{4}) \f{1}{4b} \bar{\om}^2_x \vp x^4 .
\eal
\]
Note that $\bar{u}_x(0), \bar{\om}_{xxx}, \bar{\om}_x(0)\neq 0$ are some absolute constants. Plugging the estimates \eqref{eq:vis_Sneg} and \eqref{eq:vis_Scont} into the above calculation yields \eqref{eq:vis_Sneg2}.

The proof of \eqref{eq:vis_Sneg3} is similar. Recall the definition of $\bar{u}, \bar{u}_x, \psi = \f{(x^2+b^2)^3}{2b}$ in \eqref{eq:vis_appro} and \eqref{eq:wg_vis}. A direct calculation yields 
\beq\label{eq:xpsi}
\bal
 \f{x \psi_x}{\psi}  &=  \f{ x \cdot 6x}{ x^2 + b^2}, \quad 
 \f{ (x\psi)_x}{ \psi } = 1 + \f{x \psi_x}{\psi} = 1 + \f{6x^2}{x^2 + b^2},   \\
  \quad \f{ (\bar{u}\psi)_x}{\psi}& = \bar{u}_x + \f{\bar{u}}{x} \f{x\psi_x}{ \psi}
 =\f{b^2 - x^2}{ (x^2 + b^2)^2}+ \f{1}{x^2 + b^2}  \f{ 6x^2}{ (x^2 +b^2)}
 = \f{ b^2 + 5x^2}{ (b^2 + x^2)^2}.
 \eal
\eeq
Taking $\bar{c}_l = \f{1}{3}, \bar{c}_{\om} =-1, a = \f{1}{2}$ in \eqref{eq:damping}, we get
\[
\bal
R(\f{1}{3}, -1, \f{1}{2}, x) &= 
  \f{1}{2 \psi} (  ( \f{1}{3} x + \f{1}{2} \bar{u})\psi)_x  - ( \f{4}{3} + 2 \bar{u}_x +1-\bar{u}_x) 
  =\f{1}{6}(1 + \f{6x^2}{x^2 + b^2}) + \f{1}{4}\f{ b^2 + 5x^2}{ (b^2 + x^2)^2}
  -\f{7}{3} - \bar{u}_x  \\
  &   =-\f{13}{6} + \f{x^2}{x^2 + b^2}  + \f{ b^2 + 5x^2}{4(x^2 + b^2)^2} - \f{b^2 - x^2}{(x^2+b^2)^2}
  = -\f{13}{6} + \f{x^2 }{(x^2 + b^2)} +\f{9x^2 -3 b^2} { 4(b^2 + x^2)^2} .
  \eal
\]
Next, we show that $R(\f{1}{3}, -1, \f{1}{2}, x) \leq -\f{1}{3}$, which is equivalent to
\[
  0 \geq -  \f{11}{6}\cdot 4 (x^2 + b^2)^2 + 4x^2(x^2+b^2) + 9x^2 -3 b^2 = 
   -( \f{22}{3} - 4 )x^4  + (9 + 4b^2 - \f{22}{3} \cdot 2 b^2)  x^2- (\f{22}{3} b^4 + 3 b^2) \teq I.
\] 
Using $b^2 = \f{3}{8}$, we can rewrite the above inequality as 
\[
I = -\f{10}{3} x^4 + (9 + 4 \cdot \f{3}{8} - \f{44}{3}\cdot \f{3}{8}) x^2 - \f{3}{8}( \f{22}{3} \cdot\f{3}{8} + 3) =  -\f{10}{3}x^4 + 5 x^2 - \f{3}{8} \cdot \f{23}{4}.
\]
Since $4 \times \f{10}{3}  \times \f{3}{8} \times \f{23}{4} - 5^2 = \f{230}{8} - 25 >0$, we get $I \geq 0 $ and therefore 
\[
R(\f{1}{3}, -1, \f{1}{2}, x) \leq -\f{1}{3}.
\]
Using this estimate and an argument similar to that in the proof of \eqref{eq:vis_Sneg2} implies \eqref{eq:vis_Sneg3}.
\end{proof}

\subsection{Inequalities about the Fractional Laplacian
}\label{app:lap}
In this Section, we prove Proposition \ref{prop:lap} and establish an interpolation Lemma \ref{lem:inter}. Firstly, we need an elementary Lemma.

\begin{lem}\label{lem:xyn}
For any $x, y \geq 0$ and $n \geq 1$, we have 
\[
 (x^{ \f{n+1}{2}} - y^{\f{n+1}{2}})^2
 \leq  C n (x^n - y^n) (x - y) .
\]
\end{lem}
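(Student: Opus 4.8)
The plan is to reduce to the case $x \ge y \ge 0$ — both sides of the claimed inequality are unchanged under swapping $x$ and $y$, since the left side is a square and each of the two factors on the right changes sign — and then to use an elementary integral representation together with the Cauchy--Schwarz inequality.

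First I would write, for $x \ge y \ge 0$ and $n \ge 1$,
\[
x^{\f{n+1}{2}} - y^{\f{n+1}{2}} = \f{n+1}{2} \int_y^x s^{\f{n-1}{2}} \, ds ,
\]
which makes sense for all $n \ge 1$ because $s^{(n-1)/2}$ is locally integrable on $[0,\infty)$ (this is the only spot that needs a word of care, in the borderline case $y = 0$). Applying Cauchy--Schwarz to the trivial factorization $s^{(n-1)/2} = s^{(n-1)/2}\cdot 1$ gives
\[
\lt( \int_y^x s^{\f{n-1}{2}} \, ds \rt)^2 \le (x-y) \int_y^x s^{n-1}\, ds = \f{x-y}{n}\,(x^n - y^n),
\]
and combining the two displays yields
\[
\lt( x^{\f{n+1}{2}} - y^{\f{n+1}{2}} \rt)^2 \le \f{(n+1)^2}{4n}\,(x^n - y^n)(x - y).
\]

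The final step is to absorb the prefactor: for $n \ge 1$ one has $\f{(n+1)^2}{4n} \le n$, since this is equivalent to $3n^2 - 2n - 1 = (3n+1)(n-1) \ge 0$; alternatively $\f{(n+1)^2}{4n} = \f{n}{4} + \f{1}{2} + \f{1}{4n} \le n$ for $n\ge1$. Hence the asserted bound holds, in fact with $C = 1$. I do not expect any genuine obstacle here — the statement is elementary, and the seemingly unexpected factor $n$ on the right-hand side is exactly (a bit more than) what the $\int s^{n-1}\,ds = (x^n-y^n)/n$ from Cauchy--Schwarz produces, so everything matches up cleanly.
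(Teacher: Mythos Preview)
Your proof is correct and in fact cleaner than the paper's. Both arguments ultimately rest on Cauchy--Schwarz, but the implementations differ. The paper normalizes by setting $\lambda=x/y>1$, rewrites the claim as $(\lambda^{(n+1)/2}-1)^2\les n(\lambda^n-1)(\lambda-1)$, and then sandwiches $(\lambda^m-1)/(\lambda-1)$ between two finite geometric-type sums (truncating $m$ to an integer), finally applying the discrete Cauchy--Schwarz inequality to those sums. Your route replaces this discrete machinery with the single identity $x^{(n+1)/2}-y^{(n+1)/2}=\tfrac{n+1}{2}\int_y^x s^{(n-1)/2}\,ds$ and one application of the integral Cauchy--Schwarz, which sidesteps the integer truncation entirely and delivers the explicit constant $\tfrac{(n+1)^2}{4n}\le n$, hence $C=1$. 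The paper's approach only yields an unspecified absolute constant. One minor remark: your caution about ``the borderline case $y=0$'' is unnecessary here, since $n\ge1$ forces $(n-1)/2\ge0$ and $s^{(n-1)/2}$ is continuous on $[0,\infty)$; there is no integrability issue at all.
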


\begin{proof}
The inequality is trivial if $x =0$, $y=0$ or $x = y$. For other cases, without loss of generality, we assume $x > y >0$ . Denote $\lam = \f{x}{y}  > 1$. Then the inequality is equivalent to 
\[
(\lam^{\f{n+1}{2}} - 1)^2 \les n (\lam^n - 1)(\lam - 1).
\]  
For any $m  \geq 1$, there exists $k \in Z^+$ such that $ k \leq m  < k+1$. For $\lam >1$, we have 
\[
  \lam^{m -1} + \lam^{m-2} + ...+ \lam^{m-k}
\leq  \f{ \lam^m - 1 }{\lam-1}\leq 
 \lam^{m -1} + \lam^{m-2} + ...+ \lam^{m-k} + 1  .
\]
The lower bound and the upper bound are comparable.  Suppose that $ k \leq \f{n+1}{2} < k+1$. We have $ 2k-1 \leq  n < 2k + 1$. Applying the above inequality with $m = n, \f{n+1}{2}$, we get 
\[
\f{ (\lam^n - 1) (\lam- 1)} { (\lam^{ (n+1)/2} -1)^2}
\geq  C \f{ \lam^{n-1} + \lam^{n-2} + ... + \lam^{ n - 2k + 1 } + 1  }{ (\lam^{ (n+1)/2-1}
+ \lam^{ (n+1)/2-2} + ... + \lam^{ (n+1) / 2 - k} )^2}.
\]
Using the Cauchy-Schwartz inequality, we prove 
\[
( \lam^{n-1} + \lam^{n-2} + ... + \lam^{ n - 2k + 1 } + 1  )  n 
\geq (\lam^{ (n+1)/2-1}
+ \lam^{ (n+1)/2-2} + ... + \lam^{ (n+1) / 2 - k} )^2.
\]
Combining the above two estimates completes the proof.
\end{proof}

\begin{proof}[Proof of Proposition \ref{prop:lap}]
Fro $\g = 0, 2$, the results follow from integration by parts. Recall the definition of the fractional Laplacian $\Lam^{\g}$ with $\g \in (0,2)$ (see e.g. \cite{cordoba2004maximum} and the references therein)
\[
\Lam^{\g} \om(x) = C_{\g} P.V. \int_{\R} \f{\om(x) - \om(y)} { |x- y|^{1+\g}} dy .
\]
Symmetrizing the integral, we have 
\beq\label{eq:lap_pf0}
\bal
I &\teq 
\int_{\R} \om |\om|^{ p-2} \Lam^{\g} \om dx 
= C_{\g} P.V. \int_{\R}  \om(x) |\om(x)|^{ p-2} \f{\om(x) - \om(y)} { |x- y|^{1+\g}} dy  \\
 &= \f{C_{\g}}{2} \int_{\R} 
\int_{\R}  ( \om(x) |\om(x)|^{ p-2} - \om(y) |\om(y)|^{ p-2} ) \f{\om(x) - \om(y)} { |x- y|^{1+\g}} dy . 
\eal
\eeq
Next, we show that 
\beq\label{eq:lap_pf1}
\bal
 II(x, y)  &\teq (\om(x) |\om(x)|^{ p-2} - \om(y) |\om(y)|^{ p-2} ) (\om(x) - \om(y)) \\
 &\geq C \min(p-1, p^{-1}) ( |\om(x)^{p/2} - |\om(y)|^{p/2} )^2 .
 \eal
\eeq

If $\om(x) =0$ or $\om(y) =0$, the above inequality is trivial. For $p=1$, the inequality is verified by discussing the sign of $\om(x), \om(y)$. For $p > 1$, if $sgn(x) \neq sgn(y)$, using the Cauchy-Schwartz inequality, we have
\[
II(x,y)  = ( |\om(x)|^{p-1} + |\om(y)|^{p-1}) (|\om(x)| + |\om(y)|) 
\geq ( |\om(x)|^{p/2} + |\om(y)|^{p/2})^2,
\]
which implies \eqref{eq:lap_pf1}. If $\sgn(x) = \sgn(y)$, we get 
\[
II(x, y) = ( |\om(x)|^{p-1}  - |\om(y)|^{p-1}) (|\om(x)| - |\om(y)|) 
=  (X^n - Y^n) (X-Y) ,
\]
where $n, X,Y$ are defined as follows: 
\[
X = |\om(x)|^{\al}, \quad Y = |\om(y)|^{\al}, \quad \al = \min(p-1, 1),
\quad n = \max(p-1, 1) \cdot \al^{-1} .
\]
Clearly, $n \geq 1$, $ (n+1) \al =\max(p-1, 1) + \min(p-1, 1)  = p$. Applying Lemma \ref{lem:xyn}, we yield
\[
\bal
II(x, y) &\geq C n^{-1}  ( X^{ (n+1) /2} - Y^{ (n+1)/2})^2 .
\eal
\]
Since 
\[
n^{-1} = \f{ \min(p-1, 1)}{ \max(p-1,1 )}\geq C \min(p-1, p^{-1} )  ,
\]
we prove \eqref{eq:lap_pf1} when $sgn(x) = sgn(y) \neq 0$. Combining different cases, we prove \eqref{eq:lap_pf1}. Plugging the estimate \eqref{eq:lap_pf1} in \eqref{eq:lap_pf0} conclude 
\[
\bal
I & \geq   C C_{\g} \min(p-1, p^{-1}) 
\iint  \f{ (  |\om(x)|^{ p/2} - |\om(y)|^{p/2} )^2}{ |x-y|^{1+\g}}  dx dy
=   C \min(p-1, p^{-1})  || \Lam^{\g/2} |\om|^{p/2} ||^2_{L^2} ,
\eal
\]
where $C  > 0$ is some absolute constant.
\end{proof}

We have used the following Lemma in Section \ref{sec:CCF} to prove the global well-posedness.
\begin{lem}\label{lem:inter}
Suppose that $q \in [1, 5/4]$ and $ \om \in L^1, |\om|^{q/2} \in \dot{H}^1$. We have 
\[
|| \om ||^{\f{2+q}{2}}_{L^{ \f{2+q}{2}}} \les || \om||_{L^1} || \ | \om |^{q/2} ||_{\dot{H}^{1/2}}.
\]
\end{lem}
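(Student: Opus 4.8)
The plan is to reduce Lemma~\ref{lem:inter} to a homogeneous Gagliardo--Nirenberg inequality via the substitution $f\teq|\om|^{q/2}\ge 0$. With $\al\teq 2/q$ one has $\int f^{\al}=\|\om\|_{L^1}$ and $\int f^{1+\al}=\|\om\|_{L^{(2+q)/2}}^{(2+q)/2}$, so the asserted bound is equivalent to
\[
\|f\|_{L^{1+\al}}^{1+\al}\les \|f\|_{L^{\al}}^{\al}\,\|f\|_{\dot H^{1/2}},\qquad \al=\tfrac2q\in[\tfrac85,2].
\]
Both sides transform identically under $f\mapsto c\,f(\lam\cdot)$, so this is exactly the natural interpolation estimate between $L^{\al}$ and $\dot H^{1/2}(\R)$; the hypothesis $q\in[1,5/4]$ enters only to guarantee $1<\al\le2$, equivalently $\al\le 2\le 1+\al\le 3$.

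First I would record one clean ingredient: by Plancherel and H\"older in the Fourier variable, $\|f\|_{\dot H^{1/6}}\le\|f\|_{L^2}^{2/3}\|f\|_{\dot H^{1/2}}^{1/3}$, and the subcritical Sobolev embedding $\dot H^{1/6}(\R)\hookrightarrow L^3(\R)$ then gives $\|f\|_{L^3}\les\|f\|_{L^2}^{2/3}\|f\|_{\dot H^{1/2}}^{1/3}$, i.e.
\[
\|f\|_{L^3}^{3}\les\|f\|_{L^2}^{2}\,\|f\|_{\dot H^{1/2}}.
\]
Since $\al\le 2\le 1+\al\le 3$, two applications of H\"older's interpolation inequality between Lebesgue spaces express $\|f\|_{L^2}$ and $\|f\|_{L^{1+\al}}$ as fixed powers of $\|f\|_{L^{\al}}$ and $\|f\|_{L^3}$. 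Substituting the previous display and matching the exponents closes the estimate (a short computation with the interpolation weights $t=\al/(2(3-\al))$ and $\th=\al(2-\al)/((3-\al)(1+\al))$; these stay bounded as $\al$ ranges over the compact interval $[\tfrac85,2]$, so the implied constants are uniform), yielding exactly $\|f\|_{L^{1+\al}}^{1+\al}\les\|f\|_{L^{\al}}^{\al}\|f\|_{\dot H^{1/2}}$. An equivalent route is a Littlewood--Paley argument: split at frequency $2^J$, bound $\|\Delta_j f\|_{L^{1+\al}}$ by $2^{cj}\|f\|_{L^{\al}}$ with $c=\tfrac1{\al(1+\al)}>0$ for $j\le J$ (Bernstein, using $\al>1$ so $\Delta_j$ is bounded on $L^{\al}$) and by $2^{-j/(1+\al)}\|f\|_{\dot H^{1/2}}$ for $j>J$ (Bernstein through $L^2$, using $1+\al\ge2$, together with $\|\Delta_j f\|_{L^2}\les 2^{-j/2}\|f\|_{\dot H^{1/2}}$), sum the two geometric series, and optimize in $J$.

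The only genuine obstacle is that $\dot H^{1/2}(\R)$ is \emph{scaling-critical}: it carries the scaling of $L^\infty$ but does not embed into $L^\infty$, so the naive bound $\int f^{1+\al}\le\|f\|_{L^\infty}\int f^{\al}$ is unavailable. Both routes above are arranged precisely to avoid any $L^\infty$ control of $f$ --- one by interpolating through the \emph{subcritical} embedding $\dot H^{1/6}\hookrightarrow L^3$, the other by frequency localization. The remaining points --- the exponent bookkeeping and the reduction from smooth, rapidly decaying $\om$ to general $\om$ by approximation --- are routine.
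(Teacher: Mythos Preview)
Your proposal is correct. Your second route (Littlewood--Paley splitting at a frequency $2^J$, Bernstein from $L^{\alpha}$ for low modes and through $L^2$ for high modes, then optimizing in $J$) is exactly the paper's proof: the paper writes $f=f_l+\sum_{k\ge1}f_k$ with a cutoff at scale $\lambda$, applies Bernstein to get $\|f_l\|_{L^{1+2/q}}\lesssim\lambda^{q^2/(2(q+2))}\|f\|_{L^{2/q}}$ and $\|f_k\|_{L^{1+2/q}}\lesssim(2^k\lambda)^{-q/(q+2)}\|f\|_{\dot H^{1/2}}$, sums, and balances $\lambda$.

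Your primary route through the fixed subcritical endpoint $\|f\|_{L^3}^3\lesssim\|f\|_{L^2}^2\|f\|_{\dot H^{1/2}}$ followed by H\"older interpolation between $L^{\alpha}$ and $L^3$ is a genuinely different and somewhat cleaner argument: it replaces the frequency decomposition and optimization step by one Sobolev embedding ($\dot H^{1/6}\hookrightarrow L^3$) and elementary exponent bookkeeping. The exponents you list, $t=\alpha/(2(3-\alpha))$ and $\theta=\alpha(2-\alpha)/((3-\alpha)(1+\alpha))$, are correct and do close precisely to $\|f\|_{L^{1+\alpha}}^{1+\alpha}\lesssim\|f\|_{L^{\alpha}}^{\alpha}\|f\|_{\dot H^{1/2}}$ (one checks $(1-\theta)(1+\alpha)=1+2t$ and $\theta(1+\alpha)+2t=\alpha$). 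The advantage of your approach is that all constants are manifestly fixed once $\alpha\in[8/5,2]$; the paper's approach has the same feature but requires tracking the Bernstein constants. The one small caveat in your argument is the self-referential step $\|f\|_{L^3}^3\lesssim\|f\|_{L^{\alpha}}^{2t}\|f\|_{L^3}^{2(1-t)}\|f\|_{\dot H^{1/2}}$, which requires $\|f\|_{L^3}<\infty$ to divide through; this is handled by the routine approximation you already flag.
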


\begin{remark}
We will only apply this result for $q$ close to $1$. It seems that the above 
Gagliardo-Nirenberg type interpolation is hard to find in the literature. We provide a proof.
\end{remark}

\begin{proof}
Denote $ f = |\om|^{q/2}$. The inequality is equivalent to 
\[
|| f ||^{1 + 2 / q}_{L^{1 + 2/q}}
\les || f ||^{2/q}_{L^{2/q} } || f||_{\dot{H}^{1/2}}. 
\]
Let $\chi : R \to [0, 1]$ be an even smooth cutoff function such that $\chi(x) = 1$ for $|x| \leq 1$ and $\chi(x) = 0$ for $|x | \geq 2$. Denote $ h = \cF^{-1} \chi$, where $\cF$ is the Fourier transform. Let $\lam > 0$ to be determined. We decompose $f$ into low and high frequency parts $f = f_l + f_h$ as follows
\[
f_l = \cF^{-1} ( \hat{f}(\xi)  \chi(\lam^{-1} \xi)  )  ,
  \quad f_h
  = \sum_{k \geq 1} f_k \teq \sum_{k \geq 1}  
   \cF^{-1} ( \hat{f}(\xi) (  \chi_{2^k \lam}(\xi) - \chi_{2^{k-1}\lam} (\xi) ),
\]
where $\hat{g} $ denotes the Fourier transform of $g$ and $\chi_{\lam}(\xi) \teq \chi(\lam^{-1} \xi)$. The frequency of $f_l$ supports in $|\xi| \leq 2 \lam$ and $f_k$ supports in the annulus $ |\xi| \in [2^{k-1} \lam, 2^{k+1}\lam]$.

Applying the Bernstein inequality (see e.g. \cite{bahouri2011fourier}) and then the Young's inequality for convolution, we obtain 
\[
\bal
|| f_l ||_{L^{1+ 2/q}} 
&\les \lam^{ \f{q}{2} - \f{1}{1 + 2/q}}     || f_l ||_{L^{2/q}} 
\les \lam^{ \f{q}{2} - \f{q}{q + 2}}     || f ||_{L^{2/q}} 
= \lam^{\f{q^2}{2(q+2)}}  || f ||_{L^{2/q}} 
, \\
|| f_k ||_{L^{1 + 2/q}}
&\les (2^k\lam)^{ -\f{1}{2} + \f{1}{2} - \f{1}{1 + 2/q} } || \Lam f_k ||_{L^2}
\les (2^k\lam)^{ -\f{q}{q+2}} || f||_{\dot{H}^{1/2}},
\eal
\]
where the constants are absolute since $q \in [1, \f{5}{4}]$. Applying the triangle inequality yields
\[
|| f||_{L^{1+ 2/q}} 
\les || f_l ||_{L^{1+ 2/q} } + 
\sum_{k \geq 1} || f_k ||_{L^{1 + 2/q}}
\les 
\lam^{\f{q^2}{2(q+2)}}  || f ||_{L^{2/q}} 
 + \lam^{ -\f{q}{q+2}} || f||_{\dot{H}^{1/2}}.
\]
Optimizing $\lam$ (balancing two terms in the upper bound) as follows 
\[
\lam =   || f||_{L^{2/q}}^{-2 / q}  || f||_{\dot{H}^{1/2}}^{2/q}
\]
completes the proof.
\end{proof}

\bibliographystyle{plain}
\bibliography{selfsimilar}

\end{document}